\numberwithin{equation}{section}
\theoremstyle{definition}
\newtheorem{Thm}[equation]{Theorem}
\newtheorem{Prop}[equation]{Proposition}
\newtheorem{Cor}[equation]{Corollary}
\newtheorem{Lem}[equation]{Lemma}
\newtheorem{Exa}[equation]{Example}
\newtheorem{Rmk}[equation]{Remark}
\def\imod#1{\allowbreak\mkern5mu{\operator@font mod}\,\,#1}
\begin{document}
\title [Weakly Holomorphic Modular Forms
and hyperbolic Kac-Moody algebras]
{Weakly Holomorphic Modular Forms and \\ Rank two hyperbolic Kac-Moody algebras}
\author[H. H. Kim]{Henry H. Kim$^{\star}$}
\thanks{$^{\star}$ partially supported by an NSERC grant.}
\address{Department of
Mathematics, University of Toronto, Toronto, ON M5S 2E4, CANADA and Korea Institute for Advanced Study, Seoul, Korea}
\email{henrykim@math.toronto.edu}
\author[K.-H. Lee]{Kyu-Hwan Lee}
\address{Department of
Mathematics, University of Connecticut, Storrs, CT 06269, U.S.A.}
\email{khlee@math.uconn.edu}
\author[Y. Zhang]{Yichao Zhang}
\address{Department of
Mathematics, University of Connecticut, Storrs, CT 06269, U.S.A.}
\email{yichao.zhang@uconn.edu}

\subjclass[2010]{Primary 11F22; Secondary 17B67, 11F41}
\begin{abstract}
In this paper, we compute basis elements of certain spaces of weight $0$ weakly holomorphic modular forms and consider the integrality of Fourier coefficients of the modular forms. We use the results to construct automorphic correction of the rank $2$ hyperbolic Kac-Moody algebras $\mathcal H(a)$, $a=4,5,6$, through Hilbert modular forms explicitly given by Borcherds lifts of the weakly holomorphic modular forms. We also compute asymptotic of the Fourier coefficients as they are related to root multiplicities of the rank $2$ hyperbolic Kac-Moody algebras. This work is a continuation of the paper \cite{kim2012rank}, where automorphic correction was constructed for $\mathcal H(a)$, $a=3, 11, 66$. 
\end{abstract}

\maketitle

\section*{Introduction}

The relationship between affine Kac-Moody algebras and Jacobi modular forms is well understood through the works of Macdonald \cite{macdonald1972affine}, Kac, Peterson, Wakimoto \cite{kac1984infinite,kac1988modular} and others. More mysterious relationship between hyperbolic Kac-Moody algebras and automorphic forms was perceived by Lepowsky and Moody \cite{lepowsky1979hyperbolic} and Feingold and Frenkel \cite{feingold1983hyperbolic}, and further investigated by Borcherds \cite{borcherds1992monstrous}, Gritsenko and Nikulin \cite{gritsenko1997siegel}. Surprisingly, it turned out that hyperbolic Kac-Moody algebras should be ``corrected" to be precisely related to automorphic forms. Namely, hyperbolic Kac-Moody algebras need to be extended to generalized Kac-Moody superalgebras so that the denominator functions may become automorphic forms. The resulting generalized Kac-Moody superalgebras and automorphic forms are  called {\em automorphic correction} of the hyperbolic Kac-Moody algebras. 

In a series of papers \cite{gritsenko1996igusa,gritsenko1998automorphic-1,gritsenko1998automorphic-2,gritsenko1997siegel}, Gritsenko and Nikulin constructed automorphic correction of many rank $3$ hyperbolic Kac-Moody algebras. In a recent paper of 
Kim and Lee \cite{kim2012rank}, it was shown that rank $2$ symmetric hyperbolic Kac-Moody algebras $\mathcal H(a)$, $a \ge 3$, form infinite families through chains of embeddings. (See \cite{kang1995rank, kim2012rank} for the definition of $\mathcal H(a)$.)
Moreover, they considered three specific families and constructed automorphic correction for the first algebra in each family, i.e. $\mathcal H(3), \mathcal H(11)$ and $\mathcal H(66)$. Their construction of automorphic correction utilized Hilbert modular forms given by Borcherds products and written explicitly by Bruinier and Bundschuh \cite{bruinier2003borcherds}. 

Only three families could be considered in \cite{kim2012rank} because our knowledge on weakly holomorphic modular forms was limited  at that time. In order to generalize the results, we need to extend the one-to-one correspondence between vector-valued modular forms and scalar-valued modular forms and to write explicitly Borcherds product lifts of the scalar-valued modular forms.  
For the odd prime discriminant forms, this was done by Bruinier and Bundschuh \cite{bruinier2003borcherds}; we need to consider more general discriminant forms.

Furthermore, it is necessary to compute basis elements of the spaces of weight $0$ scalar-valued modular forms and to show integrality of their Fourier coefficients. Since such Fourier coefficients are to be interpreted as root multiplicities of generalized Kac-Moody superalgebras, the integrality is crucial. In the previous paper \cite{kim2012rank}, the integrality was shown only partially and was assumed to be true.

In this paper, we overcome the obstacles mentioned above and consider more general families of rank $2$ hyperbolic Kac-Moody algebras 
$\mathcal H(a)$ attached to the generalized Cartan matrix $\begin{pmatrix} 2 & -a \\ -a & 2 \end{pmatrix}$. Let
$a^2-4=Ns^2$, $s \in \mathbb N$, and suppose that $N$ is the fundamental discriminant of $\mathbb Q(\sqrt{a^2-4})$. Then we obtain automorphic correction of $\mathcal H(a)$ in the cases $N=12, a=4$; $N=8$, $a=6$; $N=21$, $a=5$.  
Actually, a generalization of the one-to-one correspondence between vector-valued forms and scalar-valued forms has already been established by Y. Zhang in his recent preprint \cite{zhang2013vector}, and we use the results throughout this paper.

After introducing notations in Section \ref{Notations}, we compute some basis elements $f_m$ $(m\ge 1)$ with the principal part 
$\frac 1{s(m)} q^{-m}$ in the spaces of weight $0$ scalar-valued modular forms for the discriminants $N=12, 8, 21$, where $s(m)$ is a normalizing factor. We start with $\eta$-quotients and use SAGE  to compute the Fourier coefficients of $f_m$ up to $q^{20}$. An outline of the computation and a presentation of the results are contents of Section \ref{hyperbolic}. The integrality of Fourier coefficients is considered in Section \ref{sec-integ}. We use Sturm's theorem \cite{sturm1987congruence} and show how one can check the integrality of Fourier coefficients. In particular, we check the integrality for $f_1$ in each of the cases $N=12, 8, 21$.

In Section \ref{sec-BorPro}, we explicitly write the Borcherds products associated to the scalar-valued modular forms 
following \cite{borcherds1998automorphic, zhang2013vector, bruinier2003borcherds, bruinier2008hilbert}. They are Hilbert modular forms on the quadratic extension $\Bbb Q[\sqrt{N}]$.
After that, we obtain automorphic correction of the hyperbolic Kac-Moody algebras $\mathcal H(a)$, $a=4,5,6$, 
in Section \ref{sec-AutCor}, using the construction in parallel with that of the cases considered in \cite{kim2012rank}. Namely, the Borcherds product associated to $f_1$ provides the automorphic correction (Theorem \ref{auto-corr}).
As mentioned above, Fourier coefficients of $f_m$ are related to root multiplicities of the rank $2$ hyperbolic Kac-Moody algebras, and it is valuable to know their asymptotic behavior. In Section \ref{sec-Asym}, we investigate asymptotics of the Fourier coefficients of 
some $f_m$'s using the method of Hardy-Ramanujan-Rademacher and show that, for $f_N$, $N=12, 8$, all the Fourier coefficients are non-negative. 

For other rank $2$ symmetric hyperbolic Kac-Moody algebras, the situation is somewhat different since the obstruction spaces of weight 
$2$ cusp forms are nontrivial (see Lemma \ref{obstruction}). In fact we prove that $f_1$ does not exist if $N>21$ (Lemma \ref{lem-own}). Hence for $N>21$, we need new ideas in order to construct automorphic correction. We hope that we can come back to these issues in the future.

\subsection*{Acknowledgments} We thank J. Bruinier who explained to us how to extend the result in his paper with 
Bundschuh \cite{bruinier2003borcherds} to general discriminant forms.

\vskip 1cm

\section{Notations}\label{Notations}

In this section, we recall some definitions and notations from \cite{zhang2013vector}.
Let $N_1>1$ be a square-free integer. Let $F=\mathbb Q(\sqrt{N_1})$ and $\mathcal O_F$ be its ring of integers. Let $N$ be the discriminant of $F/\mathbb Q$. Thus if $N_1\equiv 2,3\imod 4$, $N=4N_1$ and $\mathcal O_F=\mathbb Z[\sqrt{N_1}]$, and if $N_1\equiv 1\imod 4$, $N=N_1$ and $\mathcal O_F=\mathbb Z\left[\frac{\sqrt{N_1}+1}{2}\right]$.  Let $\mathrm{N}(x)$ and $\mathrm{tr}(x)$ denote the norm and trace of $x \in F/\mathbb Q$ respectively.
If $\mathfrak d$ is the different of $F/\mathbb Q$, we know that
\[\mathfrak d^{-1}=\{x\in F: \text{tr}(x\mathcal O_F)\subset \mathbb Z\}=
\left\{
\begin{array}{cl}
\frac{1}{2}\mathbb Z+\frac{1}{2\sqrt{N_1} }\mathbb Z, & \quad N_1\equiv 2,3\imod 4 ;\\
\frac{1}{\sqrt{N_1}} \mathcal O_F, &\quad N_1\equiv 1\imod 4.
\end{array}
\right.
\]
Define the following lattice $L=\mathbb Z^2\oplus \mathcal O_F$ with the quadratic form
\[q(a,b,\gamma)=\text{N}(\gamma)-ab,\quad a,b\in\mathbb Z, \gamma\in\mathcal O_F.\]
The corresponding bilinear form is given by
\[((a_1,b_1,\gamma_1),(a_2,b_2,\gamma_2))=\text{tr}(\gamma_1 \gamma'_2)-a_1b_2-a_2b_1.\] We see that $L$ is an even lattice of signature $(2,2)$. Its dual lattice is $L'=\mathbb Z^2\oplus \mathfrak d^{-1}$, hence the discriminant form is given by $D=L'/L\cong \mathfrak d^{-1}/\mathcal{O}_F$. The level of $D$ is $N$. Denote $q\imod 1$ on $D$ also by $q$.

Let $k$ be an even integer. Let $\rho_D$ be the Weil representation of $SL_2(\mathbb Z)$ on $\mathbb C[D]$; that is, if $\{e_\gamma:\gamma\in D\}$ is the standard basis for the group algebra $\mathbb C[D]$, then the action
\begin{align*}
\rho_D(T)e_\gamma &= e(q(\gamma))e_\gamma,\\
\rho_D(S)e_\gamma &=\frac{1}{\sqrt{N}}\sum_{\delta\in D}e(-(\gamma,\delta))e_\delta,
\end{align*}
defines the unitary representation $\rho_D$ of $SL_2(\mathbb Z)$ on $\mathbb C[D]$. Here $e(x)=e^{2\pi i x}$, and $T=\tiny{\begin{pmatrix} 1& 1\\0&1\end{pmatrix}}$, $S=\tiny{\begin{pmatrix}0&-1\\1&0 \end{pmatrix}}$ are the standard generators of $SL_2(\mathbb Z)$.

Let $\Gamma_0(N) \subseteq SL_2(\mathbb Z)$ be the congruence subgroup of matrices whose left lower entry is divisible by $N$. The weight $k$ slash operator on a function $f$ on the upper half plane is defined as
\[(f|_kM)(\tau)=(\text{det}M)^{\frac{k}{2}}(c\tau+d)^{-k}f(M\tau), \text{ for } M={ \scriptsize \begin{pmatrix}
a&b\\
c&d
\end{pmatrix} }\in GL_2^+(\mathbb R).\] We extend the definition of slash operator to a vector-valued form: for $F=\sum_\gamma F_\gamma e_\gamma$, we define $F|_k M= \sum_\gamma (F_\gamma|_k M) e_\gamma$. 
Now we let $\mathcal A(k,\rho_D)$ be the space of modular forms of weight $k$ and type $\rho_D$. That is, $F=\sum_\gamma F_\gamma e_\gamma\in\mathcal A(k,\rho_D)$ if $F|_kM=\rho_D(M)F$ for any $M\in SL_2(\mathbb Z)$ and $F_\gamma=\sum_{n\in q(\gamma)+\mathbb Z}a(\gamma,n)q^n$ with at most finitely many negative power terms. Let $\mathcal M(k,\rho_D)$ and $\mathcal S(k,\rho_D)$ denote the space of holomorphic forms and the space of cusp forms, respectively.
We denote by $\mathcal A^\text{inv}(k,\rho_D)$ the subspace of modular forms that are invariant under $\text{Aut}(D)$. Similarly, we have $\mathcal M^{\text{inv}}(k,\rho_D)$.

Given a Dirichlet character $\chi$ modulo $N$, we denote by $A(N,k,\chi)$ ($M(N,k,\chi)$, $S(N,k,\chi)$, respectively), the space of holomorphic functions $f$ on the upper half plane that satisfy
\[(f|_kM)(\tau)=\chi(d)f(\tau),\quad\text{for all } M={\scriptsize \begin{pmatrix}a&b\\c&d\end{pmatrix} } \in\Gamma_0(N),\] 
and that are meromorphic (or are holomorphic, or vanish, respectively) at all cusps. The functions  in the space $A(N,k,\chi)$ are called {\em weakly holomorphic}.

We define the character $\chi_D:=\left ( \frac N \cdot \right )$. For any positive integer $m$, we denote by $\omega(m)$ the number of distinct prime divisors of $m$. 
Define a subspace $A^\delta(N,k,\chi_D)$ of $A(N,k,\chi_D)$ for each $\delta=(\delta_p)_{p\mid N }\in\{\pm 1\}^{\omega(N)}$ as follows:
\[A^\delta(N,k,\chi_D)=\left\{f=\sum_n a(n)q^n\in A(N,k,\chi_D)\left|
 \begin{split}a(n)=0 \text{ if } (n,N)=1 \text{ and }\\ \chi_p(n)=-\delta_p \text{ for some }p\mid N\end{split}\right.\right\}.\]
The condition we impose on the Fourier coefficients of the functions in $A^\delta(N,k,\chi_D)$ will be called the \emph{$\delta$-condition}. Then by \cite{zhang2013vector}, Proposition 4.9, 
$A^\delta(N,k,\chi_D)=\bigoplus_{\delta} A^\delta(N,k,\chi_D)$, where $\delta$ runs over $\{\pm 1\}^{\omega(N)}$,

If $p$ is odd, we see that the $p$-component of $\chi$ is $\chi_p=\left(\frac \cdot p\right)$. If $2\mid N$, the $2$-component of $\chi$ is given by 
$$\chi_2=\begin{cases} \left(\frac{-4}{\cdot}\right), &\text{if $N_1\equiv 3\imod 4$}\\ 
\left(\frac{2}{\cdot}\right), &\text{if $N_1\equiv 2\imod 8$}\\ 
\left(\frac{-2}{\cdot}\right), &\text{if $N_1\equiv 6\imod 8$}.
\end{cases}
$$
We set $\chi_1$ to be the trivial character. For each positive integer $m$, write $\chi_m=\prod_{p\mid m}\chi_p$.
For each prime $p|N$, we define $\epsilon_p=\chi_p(-1)$ if $p$ is odd,
$\epsilon_2=-1$ if $p=2$ and $N_1\equiv 3\imod 4$, and
$\epsilon_2=\chi_{N_1/2}(-1)$ if $p=2$ and $N_1\equiv 2\imod 4$. Then we set $\epsilon=(\epsilon_p) \in\{\pm 1\}^{\omega(N)}$. 
We also define $\epsilon^*=(\epsilon_p^*)$ to be $\epsilon_p^*=\chi_p(-1)\epsilon_p$. Then we can see easily that
$\epsilon_p^*=1$ for each prime $p\mid N$. 
Note that if $p|N$, $p\equiv 3$ mod 4, then the Fourier expansion of $f\in A^\epsilon(N,k,\chi_D)$ is of the form 
$f=\sum_{n\geq n_0} a_n q^n$, $a_1=0$. We also note that if $f=q+O(q^2)\in A^\delta(N,k,\chi_D)$ for some $\delta$,
then $\delta=\epsilon^*$.

We shall employ the notation $p^l||N$ for a prime number $p$ and non-negative integers $l,N$, to mean that $p^l\mid N$ but $p^{l+1}\nmid N$.

\section{Computation of Basis Elements} \label{hyperbolic}

Recall the following lemma on the obstruction of the existence of weakly holomorphic modular forms. Let $s(m)=2^{\omega((m,N))}$ for each $m \in \mathbb Z$. 
\begin{Lem} \cite[Theorem 5.5]{zhang2013vector}\label{obstruction}
Let $P(q^{-1})=\sum_{n<0}a(n)q^n$ be a polynomial in $q^{-1}$ that satisfies the $\epsilon$-condition; namely $a(n)=0$ if $\chi_p(n)=-\epsilon_p$ for some $p\mid N$. There exists $f\in A^\epsilon(N,k,\chi_D)$ with prescribed principal part $P(q^{-1})$ if and only if 
$$\sum_{n<0} s(n)a(n)b(-n)=0,
$$
for each $g=\sum_{n>0} b(n)q^n\in S^{\epsilon^*}(N,2-k,\chi_D)$. 
\end{Lem}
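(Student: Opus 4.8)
The plan is to deduce Lemma \ref{obstruction} from the known obstruction theory for \emph{vector-valued} weakly holomorphic modular forms of type $\rho_D$, transported through the isomorphism of \cite{zhang2013vector} between the $\mathrm{Aut}(D)$-invariant vector-valued forms and the scalar-valued $\delta$-isotypic spaces $A^\delta(N,k,\chi_D)$. First I would recall the classical vector-valued statement: for a principal part $\sum_\gamma\sum_{n<0}c(\gamma,n)\,q^n e_\gamma$ which is $\mathrm{Aut}(D)$-invariant, there exists $F\in\mathcal A^{\mathrm{inv}}(k,\rho_D)$ with this principal part if and only if the pairing $\sum_{\gamma}\sum_{n<0}c(\gamma,n)\,b(\gamma,-n)$ vanishes for every cusp form $G=\sum_\gamma\sum_{m>0}b(\gamma,m)q^m e_\gamma\in\mathcal S^{\mathrm{inv}}(2-k,\rho_D)$ (this is the Borcherds/Bruinier serre-duality-type criterion, and is exactly the invariant refinement of \cite{zhang2013vector}, Theorem 5.5 as cited). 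So the entire content is bookkeeping: matching the scalar principal part $P(q^{-1})$, the scalar cusp form $g\in S^{\epsilon^*}(N,2-k,\chi_D)$, and the two normalized pairings under the dictionary of \cite{zhang2013vector}.

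The key steps, in order, are as follows. (1) Under the isomorphism $\mathcal A^{\mathrm{inv}}(k,\rho_D)\xrightarrow{\sim}\bigoplus_\delta A^\delta(N,k,\chi_D)$, track how a vector-valued form $F=\sum_\gamma F_\gamma e_\gamma$ produces a scalar form: its Fourier coefficient $a(n)$ at the $q^n$ term is essentially $\sum_{\gamma:\,q(\gamma)\equiv n}a(\gamma,n)$, and conversely the coefficient $a(\gamma,n)$ with $n\equiv q(\gamma)\imod 1$ is recovered from $a(n)$ up to a combinatorial multiplicity depending on how many $\gamma$ share the same $q(\gamma)$ and the same value of $n$ modulo $N$ — this is precisely where the factor $s(n)=2^{\omega((n,N))}$ enters, since $\#\{\gamma\in D: q(\gamma)\equiv n/N,\ \text{compatible with }\chi\}$ is controlled by the number of prime divisors of $(n,N)$. (2) Check that the $\epsilon$-condition on $P$ is exactly the condition that a principal part supported on indices $n$ with $\chi_p(n)=-\epsilon_p$ would force $e_\gamma$-components with $\gamma$ outside the support of any invariant vector-valued form; i.e. the $\delta$-condition with $\delta=\epsilon^*$ is the scalar avatar of ``$\gamma$ lies in the image of $\mathfrak d^{-1}/\mathcal O_F$ with the right local conditions.'' (3) Identify $S^{\epsilon^*}(N,2-k,\chi_D)$ with $\mathcal S^{\mathrm{inv}}(2-k,\rho_D)$ under the dual of the same isomorphism (weight $2-k$, the complementary weight), noting that the target isotypic component is governed by $\epsilon^*$ because the Weil representation $\rho_D$ in complementary weight pairs with $\rho_D$ in weight $k$ via the quadratic form, flipping the sign by $\chi_p(-1)$. (4) Finally, rewrite the invariant vector-valued pairing $\sum_\gamma\sum_{n<0}c(\gamma,n)b(\gamma,-n)$ in terms of the scalar coefficients: collecting $\gamma$ with common $q(\gamma)$ and using step (1), each scalar index $n$ contributes with weight exactly $s(n)$, yielding $\sum_{n<0}s(n)a(n)b(-n)=0$, which is the claimed identity.

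The main obstacle I expect is step (1) combined with step (4): getting the normalization constant right. One has to be careful that the ``same'' coefficient $a(n)$ of the scalar form is a sum over a $\chi$-compatible $\mathrm{Aut}(D)$-orbit of $\gamma$'s, that $b(-n)$ for the cusp form is the analogous sum, and that when one multiplies and re-sums there is no double counting — the orbit sizes for $F$ and for $G$ must be the \emph{same} $s(n)$, not reciprocal or squared. The cleanest way to pin this down is to first prove the scalar-to-vector and vector-to-scalar maps of \cite{zhang2013vector} are adjoint with respect to the natural Petersson-type pairings (up to the explicit constant $s(n)$ on the $n$-th coefficient), and then the obstruction statement follows formally by transporting the vector-valued Serre duality through this adjunction. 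A secondary, more routine, point is verifying that the hypothesis ``$N$ is a fundamental discriminant'' (so that $D\cong\mathfrak d^{-1}/\mathcal O_F$ is the specific discriminant form above) guarantees the local components of $D$ at each $p\mid N$ are cyclic of the expected type, which is what makes the orbit-counting give precisely $2^{\omega((n,N))}$; all of this is contained in \cite{zhang2013vector} and only needs to be invoked.
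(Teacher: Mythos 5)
There is nothing in the paper to compare against here: Lemma \ref{obstruction} is not proved in this paper at all, it is quoted verbatim as Theorem 5.5 of \cite{zhang2013vector}. So your proposal has to be judged as a reconstruction of that external proof. Your overall strategy --- transport the Borcherds--Bruinier Serre-duality obstruction criterion for vector-valued forms of type $\rho_D$ through the scalar/vector isomorphism of \cite{zhang2013vector} --- is indeed the natural and, as far as one can tell from the citation, the intended route. But as written the proposal has two genuine gaps, and they are exactly the points where all the content lies.

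First, the normalization $s(n)=2^{\omega((n,N))}$ is asserted rather than derived. You correctly identify this as the main obstacle and propose to prove that the scalar-to-vector and vector-to-scalar maps are adjoint for the coefficient pairings ``up to the constant $s(n)$,'' but that adjunction is the whole theorem: one must fix the explicit formulas of the correspondence (how $F_\gamma$ collects the coefficients $a(n)$ of $f$ with $n\equiv Nq(\gamma)\imod N$, and with what multiplicity when several $\gamma\in D$ share the same norm class, i.e. when $(n,N)>1$), and then check that the product of the two orbit counts in the pairing collapses to a single factor $s(n)$ rather than $s(n)^2$ or $1$. Without carrying this out, the statement ``each scalar index $n$ contributes with weight exactly $s(n)$'' in your step (4) is precisely the thing to be proved. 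Second, the vector-valued obstruction theorem you invoke pairs principal parts of weight-$k$ forms of type $\rho_D$ with cusp forms of weight $2-k$ of the \emph{dual} Weil representation $\rho_D^*$ (equivalently, the representation attached to $(D,-q)$), not with $\mathcal S^{\mathrm{inv}}(2-k,\rho_D)$ as stated in your opening paragraph. Your step (3) gestures at the resulting sign flip by $\chi_p(-1)$, which is indeed where $\epsilon^*_p=\chi_p(-1)\epsilon_p$ comes from, but the identification of the $\mathrm{Aut}(D)$-invariant cusp forms for the dual representation with the scalar space $S^{\epsilon^*}(N,2-k,\chi_D)$ is a second, separate application of the correspondence (to a different discriminant form) and needs to be stated and checked, not folded into a remark about ``pairing via the quadratic form.'' Both missing pieces are exactly what Theorem 5.5 and its surrounding results in \cite{zhang2013vector} supply; the present paper simply imports them.
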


We consider the cases of $N=12$, $N=8$ and $N=21$. With SAGE, we can easily see that in all of these cases we have $S(N,2,\chi_D)=\{0\}$, hence there are no obstructions for the existence of elements in
$A^\epsilon(N,0,\chi_D)$. We explicitly compute the basis elements of the space $A^\epsilon(N,0,\chi_D)$.  
We denote by $f_m$, $m\in\mathbb Z_{>0}$, the function in the space $A^\epsilon(N,0,\chi_D)$ with the principal part $\frac 1{s(m)} q^{-m}$, i.e. $f_m=\frac{1}{s(m)}q^{-m}+O(1)$.
We will compute the Fourier coefficients of $f_m$ up to $q^{20}$. We briefly explain how the computations are performed. Firstly, we compute the weight, character, and behavior at all cusps of the $\eta$-quotients of corresponding level (\cite{martin1996multiplicative}). Secondly, we compute the order of zeros of $f_m$ at all cusps. Then we multiply $f_m$ by a suitable quotient of those $\eta$-quotients to bring $f_m$ to a holomorphic modular form of certain weight and character. Finally, with SAGE, we can obtain a basis of modular forms of the same weight and character, and by solving a linear system we find our $f_m$.

Before we list our $f_m$ explicitly in  each case, we first prove the uniqueness of $f_m$ if it exists. Note that in the case of $2\nmid N_1$ this is known by Corollary 4.13 in \cite{zhang2013vector}. For the general case, we pass to the vector valued forms using the one-to-one correspondence in \cite{zhang2013vector}.

\begin{Lem}\label{Uniqueness-2mod4} 
If $f_m$ exists, then it is unique.
\end{Lem}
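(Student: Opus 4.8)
The plan is to reduce the uniqueness statement to the corresponding fact about vector-valued modular forms via the one-to-one correspondence of \cite{zhang2013vector}, since the scalar-valued statement in the case $2\nmid N_1$ is already covered by Corollary 4.13 there and it is the case $2\mid N$ (i.e. $N_1\equiv 2,3\imod 4$) that needs a new argument. Suppose $f_m$ and $\tilde f_m$ are both in $A^\epsilon(N,0,\chi_D)$ with identical principal part $\frac{1}{s(m)}q^{-m}$. Then the difference $h=f_m-\tilde f_m$ lies in $A^\epsilon(N,0,\chi_D)$ and is holomorphic at the cusp $\infty$, i.e. $h=O(1)$ as $q\to 0$; moreover $h$ still satisfies the $\epsilon$-condition. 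The goal is to show $h$ is a constant, and in fact $0$ once one observes (from the $\epsilon$-condition, or rather the $\epsilon^*=\epsilon\cdot\chi_p(-1)$ bookkeeping in the Notations section) whether a nonzero constant is even allowed; in the relevant cases the space $A^\epsilon(N,0,\chi_D)$ of weakly holomorphic forms that are actually holomorphic everywhere should reduce to $\{0\}$ or to the constants, and a constant has $q^0$-coefficient not matching $0$ unless it is $0$.

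The key steps are as follows. First, apply the isomorphism of \cite{zhang2013vector} (the generalization of the Bruinier--Bundschuh correspondence to arbitrary discriminant forms $D$) to send $h\in A^\epsilon(N,0,\chi_D)$ to a vector-valued form $H=\sum_\gamma H_\gamma e_\gamma\in\mathcal A^{\mathrm{inv}}(0,\rho_D)$; under this correspondence the scalar Fourier coefficients $a(n)$ of $h$ determine the component coefficients $a(\gamma,n)$ (up to the normalizing factors $s(n)$), and holomorphy of $h$ at $\infty$ together with the $\epsilon$-condition translates into $H$ being holomorphic at the cusp of $\mathrm{SL}_2(\mathbb Z)$, so $H\in\mathcal M^{\mathrm{inv}}(0,\rho_D)$. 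Second, invoke the fact that a holomorphic vector-valued modular form of weight $0$ for the Weil representation $\rho_D$ on a lattice of signature $(2,2)$ is constant — indeed $\mathcal M(0,\rho_D)$ consists only of the multiples of the $\mathrm{SL}_2(\mathbb Z)$-invariant vector $\sum_{\gamma\in D_0}e_\gamma$ where $D_0$ is the set of isotropic elements fixed appropriately — this is standard (weight $0$ holomorphic forms are bounded, hence extend over the cusp, hence are constant vectors, and $\rho_D$-invariance pins down which constant vectors survive). Third, translate back: a constant vector-valued form corresponds to a constant scalar form $h=c$, and then the condition $h=O(q)$ or the matching of $q^0$-terms forces $c=0$, whence $f_m=\tilde f_m$.

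The main obstacle I expect is Step 2, making precise and citable the claim that $\mathcal M^{\mathrm{inv}}(0,\rho_D)$ is (at most) one-dimensional and spanned by an explicit invariant constant; one must be careful that the correct subspace to land in after imposing the $\epsilon$-condition is the $\mathrm{Aut}(D)$-invariant part $\mathcal M^{\mathrm{inv}}$, and that a weight-$0$ holomorphic form with nonnegative $q$-powers in every component is genuinely bounded on the upper half-plane (standard, via the fact that $\rho_D$ is unitary so $\|H(\tau)\|$ is $\mathrm{SL}_2(\mathbb Z)$-invariant and continuous on the compact fundamental domain closure together with the cusp). A secondary bookkeeping point is to confirm that under the correspondence the principal parts of $f_m$ and $\tilde f_m$ agreeing at the scalar level forces the principal parts of their vector-valued lifts to agree in every component, so that $H$ really has no negative-power terms at all; this follows from the explicit description of the correspondence in \cite{zhang2013vector} relating $a(n)$ to $a(\gamma,n)$ for $n$ in the relevant residue classes, together with the $\delta$-condition bookkeeping recalled in Section \ref{Notations}. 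Once these are in place the argument is short.
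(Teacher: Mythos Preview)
Your overall strategy---pass to the vector-valued side via the isomorphism of \cite{zhang2013vector}, show the image lies in $\mathcal M^{\mathrm{inv}}(0,\rho_D)$, and argue this forces the form to vanish---is sound, and the first steps coincide with the paper's. But two points need repair. First, your justification of Step~2 (``bounded, hence constant'') does not work: boundedness of a holomorphic function on $\mathbb H$ does not imply constancy (e.g.\ $q=e^{2\pi i\tau}$), and the $\mathrm{SL}_2(\mathbb Z)$-invariance of $\|H(\tau)\|$ only gives a continuous function on the compactified quotient, not something to which Liouville applies. The fact $\mathcal M(0,\rho_D)=\{\text{invariant constants}\}$ is true, but proving it requires real input. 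Second, your final step is off: you only know $h=O(1)$, not $h=O(q)$, and there is no a priori reason the $q^0$-terms of $f_m$ and $\tilde f_m$ agree. The clean reason a nonzero constant is excluded is simply that $\chi_D$ is nontrivial, so $M(N,0,\chi_D)=\{0\}$; you gesture at this but do not land on it.

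The paper circumvents both issues by a different (and shorter) route once $F=\psi(f)$ has all components $F_\gamma$ holomorphic at $\infty$. Rather than analyze $\mathcal M(0,\rho_D)$ globally, it uses the structural identity
\[
W:=\mathrm{span}_{\mathbb C}\{F_\gamma:\gamma\in D\}=W':=\mathrm{span}_{\mathbb C}\{F_0|M:M\in\mathrm{SL}_2(\mathbb Z)\}
\]
from \cite[Section~4]{zhang2013vector}. Since every $F_\gamma$ is holomorphic at $\infty$, so is every $F_0|M$; hence $F_0$ is holomorphic at \emph{all} cusps of $\Gamma_0(N)$, i.e.\ $F_0\in M(N,0,\chi_D)=\{0\}$. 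Then $W'=0$, so $W=0$, so $F=0$ and $f=0$. This replaces your Step~2 by an elementary reduction to the scalar vanishing $M(N,0,\chi_D)=0$, and makes your Step~6 unnecessary.
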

\begin{proof}
It is enough to prove that if $f\in A^\epsilon(N,0,\chi_D)$ is holomorphic at $\infty$ then $f=0$. The one-to-one correspondence from $A^\epsilon(N,0,\chi_D)$ to $\mathcal A^{\text{inv}}(0,\rho_D)$, constructed in \cite{zhang2013vector}, is denoted by $\psi$ and its inverse by $\phi$. Let $F=\psi(f)\in \mathcal A^{\text{inv}}(0,\rho_D)$. By Theorem 4.16 in \cite{zhang2013vector}, we see that $F_\gamma$ is holomorphic at $\infty$ for each $\gamma\in D$. Let $W=\text{span}_\mathbb{C}\{F_\gamma\}$ and $W'=\text{span}_\mathbb{C}\{F_0|M\colon M\in\text{SL}_2(\mathbb Z)\}$.
Therefore all functions in $W$, hence in $W'$, are holomorphic at $\infty$, since we know that $W=W'$ (\cite[Section 4]{zhang2013vector}). It follows that $F_0|M$ is holomorphic at $\infty$ for each $M\in SL_2(\mathbb Z)$ and $F_0\in M(N,0,\chi_D)$. So $F_0=0$, $F=0$ and $f=\phi(F)=0$. 
\end{proof}

\subsection{N=12}
In this case, $\epsilon_2=\epsilon_3=-1$.
By Corollary 4.13 and Theorem 5.5 in \cite{zhang2013vector}, we know that $f_m$ exists if and only if $m\equiv 0,1,4,6,9,10\imod 12$. We list the first few Fourier coefficients of $f_m$ as follows:
\begin{eqnarray*}
f_1&=&q^{-1}+1+2q^2+q^3-2q^6-2q^8+4q^{12}+4q^{14}-q^{15}-6q^{18}+O(q^{20}), \\
f_4&=&\frac{1}{2}q^{-4} + \frac{5}{2} - 2q^2 + 16q^3 + 22q^6 - 35q^8 - 160q^{11} +\frac{209}{2}q^{12}- 172q^{14}\\ &&\hspace{3cm}+ 416q^{15} + 390q^{18} + O(q^{20}), \\
f_6&=&\frac{1}{4}q^{-6} + 3 + \frac{27}{2}q^2 - 16q^3 + 36q^6 + 162q^8 - 864q^{11} +292q^{12} + 1080q^{14} \\ &&\hspace{3cm}- 1440q^{15} + 1629q^{18} + O(q^{20}), \\
f_9&=&\frac{1}{2}q^{-9} + 5 - 54q^2 + 6q^3 - 330q^6 + 1782q^8 + 54q^{11} + 4884q^{12}- 20844q^{14}\\ &&\hspace{3cm} - \frac{87}{2}q^{15} - 41822q^{18} + O(q^{20}), \\
f_{10}&=&\frac{1}{2}q^{-10} + 2 - 40q^2 - 160q^3 + \frac{1045}{2}q^6 - 1460q^8 + 11840q^{11} +9080q^{12}\\ &&\hspace{3cm} - 20235q^{14} - 59456q^{15} + 88440q^{18}+O(q^{20}), \\
f_{12}&=&\frac{1}{4}q^{-12} + \frac{3}{2} + 54q^2 + 144q^3 + 606q^6 + 3807q^8 + 35424q^{11} +
14184q^{12}\\ &&\hspace{3cm} + 69444q^{14} + 106144q^{15} + 177246q^{18} + O(q^{20}).
\end{eqnarray*}
For $m>12$, $f_m$ can be obtained by multiplying one of above by $j(12\tau)$ and then eliminating other negative power terms. 

\subsection{N=8}

Note that $\epsilon_2=1$. We know that $f_m$ exists if and only if $m\equiv 0,1,2,4,6,7\imod 8$. We can compute explicitly the following list:
\begin{eqnarray*}
f_1&=&q^{-1}+ 2 + 2q + 4q^2 - 4q^4 - 8q^6 + q^7 + 12q^8 - 2q^9 + 16q^{10}
- 24q^{12} - 32q^{14} \\ &&\hspace{1cm}- q^{15} + 44q^{16} + 4q^{17} + 60q^{18}+O(q^{20}), \\
f_2&=&\frac{1}{2}q^{-2} + 3 + 8q - 3q^2 + 14q^4 - 24q^6 - 64q^7 + 42q^8 + 120q^9- 80q^{10} + 132q^{12} \\ &&\hspace{1cm}-\frac{447}{2}q^{14} - 576q^{15} + 370q^{16} + 912q^{17} -573q^{18}+O(q^{20}),  \\
f_4&=&\frac{1}{2}q^{-4} + 5 - 16q + 28q^2 + 89q^4 + 280q^6 - 896q^7 + 730q^8 -
2288q^9 + 1744q^{10} \\ &&\hspace{1cm}+ 3984q^{12} + 8480q^{14} - 24448q^{15} + 17366q^{16} \\ 
&&\hspace{2cm} -48928q^{17} + 34212q^{18}+ O(q^{20}), \\ 
f_6&=&\frac{1}{2}q^{-6}+ 2 -48q - 72q^2 +420q^4 -1708q^6 + 6528q^7 + 6012q^8 - 21200q^9\\ &&\hspace{1cm} - \frac{36669}{2}q^{10} + 51128q^{12} - 133056q^{14} + 419200q^{15} +
325644q^{16}\\ &&\hspace{2cm} - 1000800q^{17} - 759864q^{18}  + O(q^{20})\\
f_7&=&q^{-7}+ 16 + 7q - 224q^2 - 1568q^4 + 7616q^6 + 128q^7 + 29792q^8 +
14q^9\\ &&\hspace{1cm} - 101248q^{10} - 310464q^{12} + 878336q^{14} - 896q^{15} +
2328928q^{16}\\ &&\hspace{2cm} - 7q^{17} - 5852448q^{18}  + O(q^{20}), \\
f_8&=&\frac{1}{2}q^{-8} + 9 + 96q + 168q^2 + 1460q^4 + 8016q^6 + 34048q^7 +
34737q^8 + 136608q^9\\ &&\hspace{1cm} + 130144q^{10} + 434472q^{12} + 1330368q^{14} +
4533504q^{15} + 3799986q^{16} \\ &&\hspace{2cm}+ 12556992q^{17} +10235352q^{18} + O(q^{20}). 
\end{eqnarray*}
As in the case of $N=12$, we can compute $f_m$ when $m>8$ using $j(8\tau)$ and the above list. 

\subsection{N=21}

This is a case when $N_1$ is composite. Note that in this case $\epsilon_3=\epsilon_7=-1$ and $\epsilon_3^*=\epsilon_7^*=1$. Therefore $f_m$ exists if and only if 
\[m\equiv 0,1,4,7,9,15,16,18\imod 21.\]
By similar computations in the previous cases, we obtain 
\begin{eqnarray*}
f_1&=&q^{-1} + \frac{1}{2} + q^3 + q^5 - q^6 - q^{14} - q^{17} + 2q^{20} + q^{21} + q^{24} -
2q^{27} - q^{33} - q^{35} - 2q^{38}\\
&&\hspace{2cm} + 3q^{41} + 2q^{42} + 3q^{45} + q^{47} - 4q^{48}
+ O(q^{49}).
\end{eqnarray*} 
Here we show more terms as the level is large. Similar computations can give all $f_m$ with $m\leq 21$ and then using $j(21\tau)$ we can have all $f_m$.

\medskip

We prove a lemma for later use in Section \ref{sec-AutCor}, which is interesting in its own right.
\begin{Lem} \label{lem-own}
The modular form $f_1$ exists if and only if $1< N \le 21$.
\end{Lem}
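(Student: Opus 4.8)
The plan is to use the obstruction criterion of Lemma \ref{obstruction} with $k=0$: the form $f_1$ with principal part $q^{-1}$ (note $s(1)=2^{\omega((1,N))}=1$) exists if and only if $\sum_{n<0} s(n)a(n)b(-n)=0$ for every $g=\sum_{n>0}b(n)q^n\in S^{\epsilon^*}(N,2,\chi_D)$. Since the principal part here is simply $a(-1)=1$ and all other $a(n)=0$, the criterion collapses to a single linear condition: $f_1$ exists if and only if $b(1)=0$ for every cusp form $g=\sum b(n)q^n$ in $S^{\epsilon^*}(N,2,\chi_D)$. So the entire lemma reduces to a statement about the first Fourier coefficients of weight $2$ cusp forms with character $\chi_D$ lying in the $\epsilon^*$-isotypic piece. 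For $1<N\le 21$ with $N$ a fundamental discriminant, one checks directly (as was already noted in the excerpt for $N=12,8,21$, where $S(N,2,\chi_D)=\{0\}$) that no such cusp form has a nonzero coefficient of $q$; for the small cases this is a finite SAGE computation, and one should tabulate the relevant fundamental discriminants $N\in\{5,8,12,13,17,21\}$ (and note the others in range, $N=4N_1$ with $N_1=2,3,6,7,\dots$, etc.) and verify the vanishing in each.

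The substance of the lemma is the converse: for $N>21$ one must exhibit, in each such case, a weight $2$ cusp form $g\in S^{\epsilon^*}(N,2,\chi_D)$ with $b(1)\ne 0$, which by Lemma \ref{obstruction} obstructs the existence of $f_1$. The natural source of such forms is the following: since $\epsilon^*_p=1$ for every $p\mid N$, the character $\chi_D=\left(\tfrac N\cdot\right)$ is even and the $\epsilon^*$-condition imposed on the Fourier coefficients is, for each $p\mid N$, that $b(n)=0$ whenever $\chi_p(n)=-\epsilon^*_p=-1$ and $(n,N)=1$. But weight $2$ newforms with a real quadratic character of this kind are abundant once the level grows; more concretely, one can produce candidates from the Eisenstein-like or CM-type constructions, or simply from the dimension formula for $S(N,2,\chi_D)$. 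I would argue: (i) establish a lower bound on $\dim S^{\epsilon^*}(N,2,\chi_D)$ that grows with $N$ and becomes positive precisely past $N=21$ — this follows from the genus/Riemann–Roch computation for $\Gamma_0(N)$ with nebentypus, together with \cite{zhang2013vector}, Proposition 4.9, to isolate the $\epsilon^*$-component; (ii) show that on a nonzero space of weight $2$ cusp forms satisfying the $\epsilon^*$-condition the linear functional $g\mapsto b(1)$ cannot be identically zero, because a cusp form all of whose coefficients $b(n)$ with $(n,N)=1$ supported on $\chi_p(n)=1$ vanish at $n=1$ and, by Hecke-operator arguments at the good primes, would then have too many vanishing coefficients to be nonzero.

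Concretely, the cleanest route for step (ii) is: suppose $g=\sum b(n)q^n\in S^{\epsilon^*}(N,2,\chi_D)$ is nonzero with $b(1)=0$. Decompose $g$ into Hecke eigenforms (oldforms and newforms) for the good Hecke operators $T_p$, $p\nmid N$. For a normalized newform the coefficient of $q$ is $1\ne 0$, so $b(1)=0$ forces cancellation among eigenforms; but distinct newforms are linearly independent and eigenvalue systems separate them, so in fact every newform component must have $b(1)=0$, which is impossible. The only loophole is oldforms coming from lower level, but for $N$ a fundamental discriminant and $\chi_D$ primitive mod $N$ there are no such oldforms — the conductor of the nebentypus equals the level. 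Hence any nonzero $g$ has $b(1)\ne 0$, and $f_1$ fails to exist as soon as $S^{\epsilon^*}(N,2,\chi_D)\ne\{0\}$, i.e. for all $N>21$.

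I expect the main obstacle to be step (i): getting a clean, uniform statement that $S^{\epsilon^*}(N,2,\chi_D)=\{0\}$ for the finitely many fundamental discriminants $1<N\le 21$ and $\ne\{0\}$ for every fundamental discriminant $N>21$. The small range is a finite check, but the ``for every $N>21$'' direction needs either an explicit dimension formula manipulated to show positivity — delicate because the $\epsilon^*$-projection can in principle kill a small space — or an explicit construction of one obstructing cusp form per case (e.g. via theta series or twists), which is cleaner to write but must be done family by family. A secondary technical point is justifying the absence of oldforms, i.e. that $\chi_D$ is primitive of conductor exactly $N$; this is immediate from $N$ being a fundamental discriminant and should be dispatched in a sentence.
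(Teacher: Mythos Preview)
Your overall strategy matches the paper's: reduce via Lemma \ref{obstruction} to whether some $g\in S^{\epsilon^*}(N,2,\chi_D)$ has $b(1)\neq 0$; check directly that $S(N,2,\chi_D)=\{0\}$ for the six fundamental discriminants $5,8,12,13,17,21$; and for $N>21$ use primitivity of $\chi_D$ to rule out oldforms and appeal to newforms.

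There is, however, a genuine error in your step (ii). You assert that every nonzero $g\in S^{\epsilon^*}$ has $b(1)\neq 0$, reasoning that ``distinct newforms are linearly independent \dots\ so in fact every newform component must have $b(1)=0$, which is impossible.'' This is a non sequitur: linear independence of the $g_i$ in no way prevents $\sum c_i b_i(1)=0$ with the $c_i$ not all zero, and indeed the conclusion is false whenever $\dim S^{\epsilon^*}\ge 2$, since $g\mapsto b(1)$ is a single linear functional. Fortunately you only need \emph{one} $g$ with $b(1)\neq 0$, and the paper obtains it in a way that also dissolves your worry about step (i). For $N>21$ the dimension formula gives $S(N,2,\chi_D)\neq\{0\}$; primitivity of $\chi_D$ forces every Hecke eigenform there to be new, so there is a normalized eigenform $f=q+O(q^2)$. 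Decomposing $f=\sum_\delta f^\delta$ and noting $\chi_p(1)=1=\epsilon^*_p$ for every $p\mid N$, the $\delta$-condition annihilates the $q$-coefficient of $f^\delta$ for all $\delta\neq\epsilon^*$; hence $f^{\epsilon^*}=q+O(q^2)\in S^{\epsilon^*}$ is the obstructing form. No direct lower bound on $\dim S^{\epsilon^*}$ is needed at all.
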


\begin{proof} The fundamental discriminants for $1<N \le 21$ are $N=5,8,12,13,17,21$. 
Now it can be checked that $S(N, 2, \chi_D)=\{ 0 \}$ for $1<N\le 21$. It follows from Lemma \ref{obstruction} that $f_1$ exists in $A^\epsilon(N,0,\chi_D)$. Now assume that $N > 21$. Using dimension formulas \cite[Section 6.3, page 98]{stein2007modular}, one can see that $S(N, 2, \chi_D) \neq \{ 0 \}$. Since $\chi_D$ is primitive, the space $S(N, 2, \chi_D)$ consists of newforms $f=\sum_{n \ge 1} b(n) q^n$. In particular, if $f$ is a Hecke eigenform, we have $b(1) \neq 0$. It follows that there exists $g=q+O(q^2)\in S^\delta(N,2,\chi_D)$ for some $\delta$. 
By the definition of $\epsilon^*$, $\delta=\epsilon^*$.
Hence by Lemma \ref{obstruction}, $f_1$ does not exist.
\end{proof}

\section{Integrality of Fourier Coefficients} \label{sec-integ}

In this section, we prove the integrality of Fourier coefficients of $f_1$ in above cases and the case when $N=17$, using Sturm's theorem. More precisely, if $f_1=\sum_na(n)q^n$, we prove that $s(n)a(n)\in\mathbb Z$. 
The case $N=17$ was considered in \cite{kim2012rank}, where the integrality was assumed to be true. 

For any congruence subgroup $\Gamma$, we denote by $M(\Gamma,k)$ the space of holomorphic modular forms of weight $k$ for $\Gamma$. For a commutative ring $R$, let $R\llbracket q\rrbracket$ be the ring of power series in $q$ over $R$. We begin with Sturm's theorem. 
\begin{Thm}[{\cite[Sturm]{sturm1987congruence}}] \label{Sturm}
Let $\Gamma$ be any congruence subgroup of $SL_2(\mathbb Z)$, $\mathcal O_F$ be the ring of integers in a number field $F$, and $\mathfrak p$ be any prime ideal. Assume $f=\sum_na_nq^n\in M(\Gamma, k)\cap \mathcal O_F\llbracket q\rrbracket $. If $a_n\in \mathfrak p$ for $n\leq \frac{k}{12}[SL_2(\mathbb Z):\Gamma]$, then $a_n\in\mathfrak p$ for all $n$.
\end{Thm}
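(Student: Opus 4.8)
The plan is to reduce to the case $\Gamma=SL_2(\mathbb Z)$ and then to exploit the $\mathbb Z$-integral structure of the ring of level-one modular forms. We may assume $T\in\Gamma$, so that $f$ has an integral-power $q$-expansion; let $M$ be the level of $\Gamma$, $\mu=[SL_2(\mathbb Z):\Gamma]$, and fix right coset representatives $\gamma_1=I,\gamma_2,\dots,\gamma_\mu$ of $\Gamma\backslash SL_2(\mathbb Z)$. First I would form the ``norm'' $g:=\prod_{i=1}^{\mu}f|_k\gamma_i$. Since $\delta\mapsto\gamma_i\delta$ permutes the cosets of $\Gamma$ while $f|_k\gamma=f$ for $\gamma\in\Gamma$, one gets $g|_{k\mu}\delta=g$ for all $\delta\in SL_2(\mathbb Z)$; and as each $f|_k\gamma_i$ is holomorphic on the upper half plane and at $\infty$ (here one uses that $f$ is holomorphic at every cusp of $\Gamma$), it follows that $g\in M(SL_2(\mathbb Z),k\mu)$. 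By Shimura's theory of the fields of definition of modular forms together with the $q$-expansion principle, the $f|_k\gamma_i$ have Fourier coefficients in the ring of integers $\mathcal O'$ of the cyclotomic extension $F':=F(\zeta_M)$; hence so does $g$, which moreover has an integral-power expansion. Fix a prime $\mathfrak p'$ of $\mathcal O'$ above $\mathfrak p$. The hypothesis $a_n\in\mathfrak p$ for $n\le\frac{k}{12}\mu$ says that the $q$-order of $f$ modulo $\mathfrak p$ exceeds $\frac{k}{12}\mu$; since the factor $f|_k\gamma_1=f$ of $g$ has this $q$-order modulo $\mathfrak p'$ while the remaining factors have non-negative $q$-order (being holomorphic at $\infty$), and $(\mathcal O'/\mathfrak p')\llbracket q^{1/M}\rrbracket$ is an integral domain, the $q$-order of $g$ modulo $\mathfrak p'$ also exceeds $\frac{k\mu}{12}$.

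The second step is the level-one case: if $h\in M(SL_2(\mathbb Z),\ell)\cap\mathcal O'\llbracket q\rrbracket$ has $q$-order modulo a prime $\mathfrak q$ of $\mathcal O'$ exceeding $\ell/12$, then $h\equiv 0\bmod\mathfrak q$. I would deduce this from the classical fact that $\bigoplus_\ell M(SL_2(\mathbb Z),\ell)\cap\mathbb Z\llbracket q\rrbracket=\mathbb Z[E_4,E_6,\Delta]$ with the single relation $E_4^3-E_6^2=1728\Delta$. Using the relation to reduce the power of $E_6$, and then grouping the monomials by their power of $\Delta$, one finds that $M(SL_2(\mathbb Z),\ell)\cap\mathbb Z\llbracket q\rrbracket$ has a $\mathbb Z$-basis $m_0,\dots,m_{d-1}$, where $d=\dim_{\mathbb C}M(SL_2(\mathbb Z),\ell)$ and $m_c=E_4^{a_c}E_6^{b_c}\Delta^c$ with $b_c\in\{0,1\}$; since $E_4,E_6$ have constant term $1$ and $\Delta=q+O(q^2)$, we have $m_c=q^c+O(q^{c+1})$. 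Thus the matrix sending $(m_0,\dots,m_{d-1})$ to their first $d$ Fourier coefficients is unipotent lower triangular over $\mathbb Z$, hence lies in $GL_d(\mathbb Z)$. Writing $h=\sum_c\lambda_c m_c$ with $\lambda_c\in\mathbb C$, invertibility over $\mathbb Z$ forces $\lambda_c\in\mathcal O'$; and since the $q$-order of $h$ modulo $\mathfrak q$ exceeds $\ell/12\ge d-1$, the first $d$ Fourier coefficients of $h$ lie in $\mathfrak q$, so every $\lambda_c$ lies in $\mathfrak q$ and $h\equiv 0\bmod\mathfrak q$. Applying this to $h=g$, $\ell=k\mu$, $\mathfrak q=\mathfrak p'$ gives $g\equiv 0\bmod\mathfrak p'$.

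It remains to descend from $g$ to $f$. Reducing modulo $\mathfrak p'$, the relation $\prod_i\overline{f|_k\gamma_i}=\bar g=0$ in the integral domain $(\mathcal O'/\mathfrak p')\llbracket q^{1/M}\rrbracket$ forces $\overline{f|_k\gamma_{i_0}}=0$ for some $i_0$; by the $q$-expansion principle a holomorphic modular form over a discrete valuation ring whose $q$-expansion at one cusp is divisible by the maximal ideal is divisible by it, so $f\equiv 0\bmod\mathfrak p'$, and as $f$ has coefficients in $\mathcal O_F$ with $\mathfrak p'\cap\mathcal O_F=\mathfrak p$ we get $a_n\in\mathfrak p$ for all $n$.

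I expect the main obstacle to be \emph{not} the level-one step — which is elementary linear algebra once the integral structure of $\bigoplus_\ell M(SL_2(\mathbb Z),\ell)$ is granted, with no exceptional behaviour at $p=2,3$ — but rather the arithmetic of the first and third steps: that the $f|_k\gamma_i$, and hence $g$, have Fourier coefficients in the ring of integers of a fixed cyclotomic field (the delicate point being primes dividing the level, where one should invoke a suitable form of the $q$-expansion principle or insert an auxiliary multiplier to clear denominators), and that divisibility of $g$ by $\mathfrak p'$ propagates back to $f$. In the situations actually needed in this paper, where $\Gamma=\Gamma_0(N)$ and $f$ has rational Fourier coefficients, these inputs are classical; alternatively one may simply appeal to \cite{sturm1987congruence}.
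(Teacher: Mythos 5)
The paper offers no proof of this statement --- it is quoted directly from Sturm's paper --- so your proposal stands or falls on its own. Its skeleton (norming $f$ from $\Gamma$ up to $SL_2(\mathbb Z)$, then exploiting the unipotent-triangular integral basis of level-one forms) is indeed the standard route, and your level-one step is correct as written: the monomials $E_4^{a_c}E_6^{b_c}\Delta^{c}$ with $b_c\in\{0,1\}$ give a basis with $m_c=q^c+O(q^{c+1})$, one has $\dim M(SL_2(\mathbb Z),\ell)-1\le \ell/12$, and the unimodular linear algebra works over any local ring with no exceptional primes. The modularity of the norm $g=\prod_i f|_k\gamma_i$ for the full modular group is also fine.

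The genuine gap is the arithmetic assertion in your first step, on which your third step then depends: it is \emph{false} in general that the $f|_k\gamma_i$, or $g$, have coefficients in the ring of integers of $F(\zeta_M)$. Shimura's theory gives the field, and the $q$-expansion principle gives integrality only after inverting the level, so denominators supported at primes dividing $M$ do occur. Concretely, $f(\tau)=\Delta(M\tau)\in S_{12}(\Gamma_0(M))\cap\mathbb Z\llbracket q\rrbracket$ has $f|_{12}S=M^{-12}\Delta(\tau/M)$, and for $M=2$ the norm over the three cosets $\{I,S,ST\}$ equals $-2^{-24}\Delta(\tau)^3$. This is not a corner case you may wave away at the end: in this paper the primes that must be excluded from the denominators of $f_1$ are exactly primes dividing the level ($2,3$ for $N=12$; $2$ for $N=8$; $3,7$ for $N=21$; $17$ for $N=17$), and rationality of the coefficients (your closing remark) does not help, as the example above already has integer coefficients. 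For such a prime $\mathfrak p'$ the reductions $\overline{f|_k\gamma_i}$ in your descent step need not even be defined, and the divisibility form of the $q$-expansion principle you invoke there is itself only available with the level invertible. The standard repair keeps your skeleton but replaces integrality by a content argument: for an expansion in $q^{1/M}$ with bounded denominators let $v(h)$ be the minimal $\mathfrak p'$-valuation of its coefficients and $\lambda(h)$ the first index attaining it; by Gauss's lemma over the discrete valuation ring both are additive in products. If some $a_n\notin\mathfrak p$, then $v(f)=0$ and the hypothesis forces $\lambda(f)>k\mu/12$, hence $\lambda(g)>k\mu/12$ while $v(g)$ is finite; rescaling $g$ by a constant of valuation $-v(g)$ and applying your level-one lemma over the local ring yields content $\ge 1$, a contradiction. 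This also makes your third step, and any appeal to the $q$-expansion principle, unnecessary; alternatively one simply cites Sturm, as the paper does.
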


\begin{Cor} \label{cor-Sturm}
Let $\Gamma$ be any congruence subgroup of $SL_2(\mathbb Z)$. Assume $f=\sum_na_nq^n\in M(\Gamma, k)\cap \mathbb Q\llbracket q\rrbracket $ with bounded denominator. If $a_n\in \mathbb Z$ for $n\leq \frac{k}{12}[SL_2(\mathbb Z):\Gamma]$, then $a_n\in\mathbb Z$ for all $n$.
\end{Cor}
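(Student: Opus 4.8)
The plan is to deduce the corollary from Sturm's theorem (Theorem \ref{Sturm}) by clearing denominators. Since $f=\sum_n a_n q^n$ has bounded denominator, there is a nonzero integer $D$ such that $Df\in\mathbb Z\llbracket q\rrbracket$; multiplying $D$ by suitable factors we may assume $D$ is a prime power, and in fact it suffices to prove the statement one prime at a time. So fix a prime $p$ and let $p^e$ be the exact power of $p$ dividing the common denominator of the $a_n$; we must show $e=0$. Suppose for contradiction that $e\ge 1$. Then $g:=p^{e}f\in M(\Gamma,k)\cap\mathbb Z\llbracket q\rrbracket$, but not all coefficients of $p^{e-1}f$ lie in $\mathbb Z$, i.e. $g$ has at least one coefficient not divisible by $p$.

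First I would invoke Theorem \ref{Sturm} with $F=\mathbb Q$, $\mathcal O_F=\mathbb Z$, and $\mathfrak p=(p)$, applied to $g$. The hypothesis of the corollary says $a_n\in\mathbb Z$ for all $n\le \frac{k}{12}[SL_2(\mathbb Z):\Gamma]$; hence for those $n$ we have $g_n=p^e a_n\in p^e\mathbb Z\subseteq (p)$, so $g_n\in\mathfrak p$ in that range. Sturm's theorem then forces $g_n\in (p)$ for \emph{all} $n$, i.e. every coefficient of $g=p^e f$ is divisible by $p$. But then $p^{e-1}f=\tfrac1p g\in\mathbb Z\llbracket q\rrbracket$, contradicting the minimality of $e$. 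Therefore $e=0$, i.e. $f$ has no $p$ in its denominators; as $p$ was arbitrary, $f\in\mathbb Z\llbracket q\rrbracket$.

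I expect no serious obstacle here: the only point requiring a little care is the reduction to a single prime, which is exactly where the ``bounded denominator'' hypothesis is used — without it one could not write the denominators as a single integer $D$ and run the argument prime-by-prime. One should also note that $g=p^e f$ genuinely lies in $M(\Gamma,k)$ (not merely in the formal power series ring), which is immediate since $M(\Gamma,k)$ is a $\mathbb C$-vector space containing $f$; this is what lets us apply Theorem \ref{Sturm} to $g$. With these observations in place the argument is a direct, short deduction from Sturm's theorem.
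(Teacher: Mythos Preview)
Your approach is essentially the same as the paper's: clear denominators, apply Sturm's theorem at a prime $p$, and contradict minimality. However, there is a genuine slip in your execution. You set $g=p^{e}f$ and claim $g\in\mathbb Z\llbracket q\rrbracket$, but if the common denominator of the $a_n$ has prime factors other than $p$, multiplying by $p^{e}$ does not clear them; e.g.\ if the minimal denominator is $6$ and $p=2$, then $2f$ still has $3$'s in its denominators, so $g\notin\mathbb Z\llbracket q\rrbracket$ and Theorem~\ref{Sturm} cannot be applied to $g$ as stated. (The preceding sentence ``multiplying $D$ by suitable factors we may assume $D$ is a prime power'' does not help: enlarging $D$ cannot make it a prime power in general.)

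The fix is exactly what the paper does: let $M$ be the \emph{minimal} positive integer with $Mf\in\mathbb Z\llbracket q\rrbracket$, assume $M>1$, pick any prime $p\mid M$, and apply Sturm to $Mf$ (not $p^{e}f$). Since $a_n\in\mathbb Z$ for $n$ up to the Sturm bound, each $Ma_n$ lies in $M\mathbb Z\subseteq p\mathbb Z$ there, so Sturm gives $p\mid Ma_n$ for all $n$, whence $(M/p)f\in\mathbb Z\llbracket q\rrbracket$, contradicting minimality of $M$. With this correction your argument coincides with the paper's.
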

\begin{proof}
Let $M$ be the smallest positive integer such that $Mf\in\mathbb Z\llbracket q\rrbracket $, and we need to prove that $M=1$. Suppose $M>1$ and let $p$ be any prime divisor of $M$.
Now $Mf\in M(\Gamma,k)\cap \mathbb Z\llbracket q \rrbracket$ and $p\mid Ma_n$ for all $n$ up to $\frac{k}{12}[SL_2(\mathbb Z):\Gamma]$. By Theorem \ref{Sturm}, we have $p\mid Ma_n$ for all $n$. Therefore, $p^{-1}Mf\in \mathbb Z\llbracket q\rrbracket $, contradicting the minimality of $M$.
\end{proof}

\begin{Exa}
In the case when $N=p=17$, we have $[SL_2(\mathbb Z):\Gamma_1(17)]=288$, so the Sturm's bound is $96$. By Proposition 8 in \cite{bruinier2003borcherds} or Proposition 5.7 in \cite{zhang2013vector}, $f_m$ has bounded denominator for each $m$. In this case we see that $\eta(\tau)^7\eta(17\tau)f_1\in M(\Gamma_1(17),4)$ (Mayer \cite{mayer2007hilbert} used a different $\eta$-product). Because of the constant term of $f_1$ is $1/2$, we consider the form $g=\eta(\tau)^7\eta(17\tau)(f_1-\frac{1}{2})$ instead. With SAGE, we can explicitly see that all of the first $96$ Fourier coefficients of $g$ are integral, hence all Fourier coefficients of $g$ are integral by Corollary \ref{cor-Sturm}. Therefore, all Fourier coefficients of $f_1-\frac{1}{2}$ are integral. We can do similarly the cases $N=12,8$ and $21$. Here we only briefly mention some details:
\begin{itemize}
\item $N=12$: we have $f_1(\tau)\eta(\tau)^2\eta(3\tau)^{-2}\eta(4\tau)\eta(6\tau)^2\eta(12\tau)\in M(\Gamma_1(12),2)$, so the Sturm's bound is $16$ and we readily see the integrality.
\item $N=8$: we have $f_1(\tau)\eta(\tau)^{-2}\eta(2\tau)^{3}\eta(4\tau)\eta(8\tau)^2\in M(\Gamma_1(8),2)$, so the Sturm's bound is $8$ and the integrality follows easily.
\item $N=21$: Because of the constant term $1/2$, we consider $\left(f_1(\tau)-\frac{1}{2}\right)\eta(\tau)^{12}\eta(3\tau)^{-3}\eta(7\tau)^3\in M(\Gamma_1(21),6)$. Then the Sturm's bound is $192$ and the integrality also follows via SAGE.
\end{itemize}
\end{Exa}

\begin{Rmk}
The integrality of $s(n)a(n)$ is expected to hold generally for a \emph{reduced modular form} like $f_m$. This type of integrality is more precise than the naive integrality $a(n)\in\mathbb Z$. This question is raised in \cite{zhang2013zagier}; see Section 6 therein for details.
\end{Rmk}

\section{Borcherds Products} \label{sec-BorPro}

In this section, we explicitly write Borcherds products corresponding to modular forms in $A^\epsilon(N,0,\chi_D)$. We will use the Hilbert modular forms given by these Borcherds products to establish automorphic correction of some rank $2$ hyperbolic Kac-Moody algebras in the next section. 

Let $F=\mathbb Q(\sqrt N_1)$ for $N_1>1$, a square-free integer, and let $N$ be the fundamental discriminant as before. We keep the notations in Section \ref{Notations}. We write $x'$ for the conjugate of an element $x\in F$. Then $\text{tr}(x)=x+x'$ and $N(x)=xx'$. Denote by $\varepsilon_0$ the fundamental unit in $F$; in particular $\varepsilon_0>1$.
Recall that we have the lattice $L=\mathbb Z^2\oplus \mathcal O_F$ with the quadratic form $q(a,b,\lambda)=N(\lambda)-ab$ for $a,b\in\mathbb Z$ and $\lambda\in\mathcal O_F$. We also have the dual lattice $L'=\mathbb Z^2\oplus \mathfrak d^{-1}$ of $L$, the discriminant form  $D=L'/L$, and $\chi_D=\left(\frac{N}{\cdot}\right)$.

Denote $\mathbf \Gamma_F=SL_2(\mathcal O_F)$ the Hilbert modular group. Let $\mathbb H$ be the upper half plane; we use $(z_1,z_2)$ as a standard variable on $\mathbb H\times \mathbb H$ and write $(y_1,y_2)$ for its imaginary part.
For every positive integer $m$, we denote by $T(m)$ the $\mathbf\Gamma_F$-invariant algebraic divisor on $\mathbb H\times \mathbb H$ defined by:
\[T(m)=\sum_{(a,b,\lambda)\in L'/\{\pm 1\}\atop ab-N(\lambda)=m/N}\left\{(z_1,z_2)\in\mathbb H\times\mathbb H: az_1z_2+\lambda z_1+\lambda' z_2+b=0\right\}.\] Moreover, define for $m>0$ the subset in $\mathbb R_{>0}\times \mathbb R_{>0}$:
\[S(m)=\bigcup_{\lambda \in \mathfrak d^{-1}\atop -N(\lambda)=m/N}\left\{(y_1,y_2)\in\mathbb R_{>0}\times \mathbb R_{>0}\colon \lambda y_1+\lambda' y_2=0\right\}.\] 

Fix a weakly holomorphic form $f=\sum_{n\in\mathbb Z}a(n)q^n\in A^\epsilon(N,0,\chi_D)$. Each connected component $\mathcal W$ of the space
\[\mathbb R_{>0}\times \mathbb R_{>0}-\bigcup_{n<0\atop a(n)\neq 0}S(-n)\]
is called a {\em Weyl chamber} associated to $f$, following the terminology in \cite{bruinier2008hilbert}. We say $\lambda\in F$ is positive with respect to a Weyl chamber $\mathcal W$, if $\lambda y_1+\lambda'y_2>0$ for all vectors $(y_1,y_2)\in \mathcal W$, in which case we write $(\lambda,\mathcal W)>0$. For each positive integer $m$ and each Weyl chamber $\mathcal W$, define
\[R(m,\mathcal W)=\{\lambda\in\mathfrak d^{-1}: \ N(\lambda)=-m/N, \ (\varepsilon_0^{-2}\lambda, \mathcal W)<0, \ (\lambda,\mathcal W)>0\}.\] 

When $N$ is a prime $p \equiv 1 \imod 4$, Bruinier and Bundschuh explicitly described Borcherds products corresponding to weight $0$ weakly holomorphic modular forms in \cite{bruinier2003borcherds}. With the above settings and computations, we extend their description to cover non-prime cases in Theorem \ref{thm-BB} below. The general construction of Borcherds can be found in his seminal paper \cite{borcherds1998automorphic}. 

\begin{Thm}[{Compare to \cite[Theorem 9]{bruinier2003borcherds}}] \label{thm-BB}
Let $f=\sum_{n\in\mathbb Z}a(n)q^n\in A^\epsilon(N,0,\chi_D)$ be such that $s(n)a(n)\in\mathbb Z$ for all $n<0$. Then there is a meromorphic function $\Psi(z_1,z_2)$ on $\mathbb H\times \mathbb H$ with the following properties:

(1) $\Psi$ is a meromorphic Hilbert modular form for $\mathbf\Gamma_F$ with some unitary character of finite order. The weight of $\Psi$ is equal to $s(0)a(0)/2$.

(2) The divisor of $\Psi$ is determined by the principal part (at $\infty$) of $f$ and equals \[\sum_{n<0}s(n)a(n)T(-n).\]

(3) Let $\mathcal W$ be a Weyl chamber attached to $f$. Then the function $\Psi$ has the Borcherds product expansion
\begin{equation} \label{eqn-BB} 
\Psi(z_1,z_2)=e(\rho z_1+\rho'z_2)\prod_{\nu\in\mathfrak d^{-1}\atop (\nu,\mathcal W)>0}\left(1-e(\nu z_1+\nu' z_2) \right)^{s(N\nu\nu')a(N\nu\nu')},
\end{equation}
where $e(z)=e^{2\pi iz}$. Furthermore,
the Weyl vector $\rho$ associated to $f$ and $\mathcal W$ is given by
\[\rho=\rho_{f,\mathcal W}=\frac{1}{\varepsilon_0^2-1}\sum_{m>0}s(-m)a(-m)\sum_{\lambda\in R(m,\mathcal W)}\lambda.\] The product converges normally for all $(z_1,z_2)$ with $y_1y_2\gg 0$.

(4) There exists a positive integer $c$ such that $\Psi^c$ has integral rational Fourier coefficients with greatest common divisor $1$.
\end{Thm}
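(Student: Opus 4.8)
The plan is to obtain $\Psi$ as the Borcherds lift of a vector-valued modular form. First I would push $f$ forward to a vector-valued form $F=\psi(f)\in\mathcal A^{\mathrm{inv}}(0,\rho_D)$ via the one-to-one correspondence of \cite{zhang2013vector}; the hypothesis $s(n)a(n)\in\mathbb Z$ for $n<0$ should translate, under this correspondence, into the statement that the principal part of $F$ has integer coefficients $c(\gamma,n)$ for $n<0$ (this is exactly the point of the normalizing factor $s(n)$). Then I would feed $F$ into Borcherds' general product theorem from \cite{borcherds1998automorphic}: since $L$ has signature $(2,2)$ and we are in the Hilbert modular situation, the lift $\Psi=\Psi(F)$ is a meromorphic Hilbert modular form for $\mathbf\Gamma_F$ with a unitary character of finite order, of weight $c(0,0)/2=s(0)a(0)/2$, whose divisor on $\mathbb H\times\mathbb H$ is $\sum_{n<0}\sum_\gamma c(\gamma,n)H(\gamma,n)$; identifying the rational quadratic divisors $H(\gamma,n)$ with the $T(-n)$ defined above (using $D\cong\mathfrak d^{-1}/\mathcal O_F$ and the invariance of $F$ under $\mathrm{Aut}(D)$, which collapses the sum over $\gamma$) gives statement (2), and reading off the product expansion gives (3) with the stated Weyl vector $\rho$. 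For this I must match Borcherds' general Weyl-vector formula (an average of certain lattice vectors over a Weyl chamber) with the explicit expression $\tfrac{1}{\varepsilon_0^2-1}\sum_{m>0}s(-m)a(-m)\sum_{\lambda\in R(m,\mathcal W)}\lambda$; this is the computation already carried out by Bruinier–Bundschuh in the prime case \cite[Theorem 9]{bruinier2003borcherds}, and the point here is that their argument only uses the structure of the genus-$(2,2)$ lattice and goes through verbatim for the general discriminant forms we consider, once the correspondence of \cite{zhang2013vector} is in hand.

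For part (4) I would argue as follows. By (3), $\Psi$ has a product expansion $\Psi=e(\rho z_1+\rho'z_2)\prod(1-e(\nu z_1+\nu'z_2))^{s(N\nu\nu')a(N\nu\nu')}$ in which every exponent $s(N\nu\nu')a(N\nu\nu')$ is a rational integer by hypothesis together with the fact that $s(n)a(n)\in\mathbb Z$ for $n\ge 0$ as well (the nonnegative coefficients of a weakly holomorphic form of this type are $s(n)$-integral by the correspondence, or one simply notes they are the $c(\gamma,n)$, which are integers once finitely many principal-part coefficients are). Hence $\Psi\cdot e(-\rho z_1-\rho'z_2)$ is a (possibly infinite) product of factors $(1-Q^\nu)^{k_\nu}$ with $k_\nu\in\mathbb Z$ and $Q^\nu=e(\nu z_1+\nu'z_2)$; expanding, this lies in $\mathbb Z\llbracket Q\rrbracket$ after clearing, i.e. the Fourier coefficients of $\Psi$, normalized by pulling out the fractional power $e(\rho z_1+\rho'z_2)$, are rational. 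Now $\rho\in F$ need not lie in $\mathfrak d^{-1}$, but $\rho$ has bounded denominator, so some power $\Psi^c$ has a genuine $Q$-expansion indexed by $\mathfrak d^{-1}$ (or a sublattice) with \emph{rational} coefficients, and these coefficients are obtained by multiplying and adding integers, hence are themselves rational integers. Dividing $\Psi^c$ by the gcd of its coefficients — which is finite since the leading coefficient is $1$ — gives the normalization with content $1$.

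The main obstacle I anticipate is not (4), which is essentially formal once the product expansion (3) is established, but rather the clean translation of the integrality hypothesis ``$s(n)a(n)\in\mathbb Z$'' through the scalar-to-vector correspondence $\psi$ of \cite{zhang2013vector}: one must check that the $s(m)$-normalization is precisely the factor relating scalar Fourier coefficients $a(n)$ to vector-valued coefficients $c(\gamma,n)$ (this is where $s(m)=2^{\omega((m,N))}$ comes from — it counts the number of $\gamma\in D$ with $N\gamma\gamma'\equiv n$ in the relevant fibre), so that integrality of $s(n)a(n)$ is equivalent to integrality of the principal part of $F$, which is what Borcherds' theorem actually requires. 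Once that bookkeeping is settled, parts (1)--(3) are an application of \cite[Theorem 13.3]{borcherds1998automorphic} specialized to the $(2,2)$ Hilbert case exactly as in \cite{bruinier2008hilbert, bruinier2003borcherds}, and (4) follows from the integrality of the exponents in the Borcherds product together with the bounded-denominator property of the Weyl vector.
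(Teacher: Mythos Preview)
Your treatment of parts (1)--(3) is essentially the paper's own argument: pass to the vector-valued form via the correspondence in \cite{zhang2013vector} (Theorem 4.16 there), apply Borcherds' Theorem 13.3, and read off the Weyl vector from the Hilbert-modular specialization. The paper cites \cite[Section 3.2]{bruinier2008hilbert} rather than \cite{bruinier2003borcherds} for the Weyl-vector formula, which matters slightly since the latter only treats prime discriminant, but you anticipate this and your claim that the computation ``goes through verbatim'' is what the paper asserts as well.

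Part (4) is where your argument diverges, and it contains a real gap. You assert that $s(n)a(n)\in\mathbb Z$ for $n\ge 0$, justified by the remark that the $c(\gamma,n)$ ``are integers once finitely many principal-part coefficients are.'' This is not true in general: integrality of the principal part of a vector-valued weakly holomorphic form does \emph{not} force integrality of all Fourier coefficients---one gets only bounded denominators, and even that requires a nontrivial result. Indeed, the paper devotes all of Section~\ref{sec-integ} to establishing integrality of $s(n)a(n)$ for the specific forms $f_1$ via Sturm's bound, precisely because it is not automatic. What part (4) actually needs is the weaker bounded-denominator statement, and the paper obtains it by citing Proposition 5.7 of \cite{zhang2013vector}. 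There is a further subtlety you miss: that proposition as stated does not cover the case $N_1\equiv 2\pmod 4$, and the paper closes this gap by invoking the $W=W'$ argument from Lemma~\ref{Uniqueness-2mod4} to extend Proposition 5.7 to $\delta=\epsilon$ in that case. Your direct product-expansion approach would be salvageable if you replaced the false integrality claim with the correct bounded-denominator input and supplied (or cited) that input, but at that point you are essentially reproducing the paper's proof.
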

\begin{proof}
Most of the statements follow from Borcherds's Theorem 13.3 in \cite{borcherds1998automorphic} and the isomorphism between vector-valued and scalar-valued modular form spaces (Theorem 4.16 in \cite{zhang2013vector}). For the computation of the Weyl vector, one can see Section 3.2 in \cite{bruinier2008hilbert}. The last statement follows from Proposition 5.7 in \cite{zhang2013vector}, except the case when $N_1\equiv 2\imod 4$. Actually, by a similar argument utilized in Lemma \ref{Uniqueness-2mod4}, we can see that Proposition 5.7 in \cite{zhang2013vector} is also true for the case $N_1\equiv 2\imod 4$ when $\delta$ is specified to be $\epsilon$ (or $\epsilon^*$). 
\end{proof}

\begin{Rmk}
When $f=q^{-1}+O(1)$, we can compute Weyl vectors more explicitly. For example, if we choose $\mathcal W$ to be the one that contains $(1,\varepsilon_0)$, then
\begin{equation} \label{Weyl-vector}
\rho_{f,\mathcal W}=\left\{
\begin{array}{cl}
\frac{\varepsilon_0}{\sqrt N}\frac{1}{\text{tr}(\varepsilon_0)} & \quad\text{if } N(\varepsilon_0)=-1,\\
\frac{1+\varepsilon_0}{\text{tr}(\sqrt N\varepsilon_0)} & \quad\text{if } N(\varepsilon_0)=1.\\
\end{array}
\right.
\end{equation} This formula is given in Example 3.11 in \cite{bruinier2008hilbert}.
\end{Rmk}

\subsection{Computing the Weights}

In this subsection, we explain  how to compute the weights of the  Hilbert modular forms in Theorem \ref{thm-BB} and will consider the case $N=12$ as an example.
Let $f_m$ be defined as above, and the corresponding vector-valued modular form will be denoted by $F_m=\sum_\gamma F_{\gamma,m}e_\gamma$. By Theorem 13.3 in \cite{borcherds1998automorphic}, the weight of $\Psi_f$ is given by $\frac{1}{2}a_0(0)$ where $a_0(0)$ is the constant coefficient of $F_{0,m}$. By Theorem 4.16 in \cite{zhang2013vector}, this is in turn given by
\[\frac{1}{2}a_0(0)=\frac{1}{2}s(0)a(0),\]
where $a(0)$ is the constant Fourier coefficient of $f_m$. According to this, the weights in the case of $N=12$, where $s(0)=4$, are given by
\[\begin{array}{c|c|c|c|c|c|c}
m& 1&4&6&9&10&12\\
\hline
a(0) &1& 5/2&3&5&2&3/2\\
\hline
\text{weight} &2& 5&6&10&4&3
\end{array}
\]

Alternatively, one can also compute the weight by the  theorem on obstructions (\cite[Theorem 5.5]{zhang2013vector}) as noted in \cite{bruinier2003borcherds}. More precisely, $a(0)$ is given by the Eisenstein series, $E^{\epsilon^*}=1+\sum_{n>0}B(n)q^n$, in the dual modular form space, as follows:
\[a(0)=-\frac{1}{s(0)}\sum_{n<0}s(n)a(n)B(-n),\]
and the weight is then $-\frac{1}{2}\sum_{n<0}s(n)a(n)B(-n)$. Therefore, the principal part determines the weight explicitly. It follows that if $s(n)a(n)\in\mathbb Z$ for all negative $n$, the weight is half integral (or integral). In the case $N=12$, the constant term $a(0)$ of $f_m$ is given by $-\frac{B(m)}{4}$. Since we have
\[E^{\epsilon^*}=1 - 4q - 10q^4 - 12q^6 - 20q^9 - 8q^{10} - 6q^{12} - 56q^{13} - 34q^{16}+O(q^{18}),\]
we  see that $a(0)$ matches the data given above for each $m$.

\begin{Rmk}
(1) The computation of $a(0)$ by $B(m)$ is a special case of a more general phenomenon, {\em Zagier duality}. See Theorem 5.7 in \cite{zhang2013zagier}; note that we normalize $E^{\epsilon^*}$ differently therein.

(2) Since $-\frac{B(m)}{2}$ represents the weight of some holomorphic Hilbert modular form, it makes sense that the coefficients $B(m) (m>0)$ are all negative and integral.
 \end{Rmk}

\section{Automorphic Correction} \label{sec-AutCor}

In this section, we construct automorphic correction of some rank $2$ hyperbolic Kac-Moody algebras. We begin with the definition of automorphic correction. More details on automorphic correction can be found in \cite{gritsenko1997siegel,gritsenko2002classification, kim2012rank, kim2013automorphic}.

\subsection{Definition}\label{automorphic}
A Kac-Moody algebra $\mathfrak g$ is called {\em Lorentzian} if its generalized Cartan matrix 
is given by a set of simple roots of a Lorentzian lattice $M$, namely, a lattice with a non-degenerate integral symmetric bilinear form 
$(\cdot, \cdot)$ of signature $(n,1)$ for some integer $n \ge 1$. 
A vector $\alpha\in M$ is a root if $(\alpha,\alpha) > 0$ and $(\alpha,\alpha)$ divides $2(\alpha,\beta)$ for all $\beta\in M$.
Let $\Pi$ be a set of (real) simple roots. Then the generalized Cartan matrix $A$ is given by
    \[ A= \begin{pmatrix} \frac {2(\alpha, \alpha')}{(\alpha, \alpha)} \end{pmatrix}_{\alpha, \alpha' \in \Pi}.
\]
The Weyl group $W$ is a subgroup of $O(M)$. 
Consider the cone \[ V(M)= \{ \beta \in M \otimes \mathbb R \, | \, (\beta,\beta) <0 \} ,\] which is a union of two half cones. One of these half cones is denoted by $V^+(M)$. The reflection hyperplanes of $W$ partition $V^+(M)$ into fundamental domains, and we choose one fundamental domain $\mathfrak D \subset V^+(M)$ so that the set $\Pi$ of (real) simple roots is orthogonal to the fundamental domain $\mathfrak D$. Then 
\[ \mathfrak D = \{ \beta \in \overline{V^+(M)} \, | \, (\beta, \alpha) \le 0  \text{ for all } \alpha \in \Pi \}.
\]
We have a Weyl vector $\rho \in M \otimes \mathbb Q$ satisfying $(\rho , \alpha) = - (\alpha, \alpha)/2$ for each $\alpha \in \Pi$.

Define the complexified cone $\Omega(V^+(M))= M \otimes \mathbb R + i V^+(M)$. Let $L=\begin{pmatrix} 0 & -m \\ -m & 0 \end{pmatrix} \oplus M$ be an extended lattice for some $m \in \mathbb N$. We consider the quadratic space $V=L \otimes \mathbb Q$ with the quadratic form induced from the bilinear form on $L$.
Let $V(\Bbb C)$ be the complexification of $V$ and $P(V(\Bbb C))=(V(\Bbb C)-\{0\})/\Bbb C^*$ be the corresponding projective space.
Let $\mathcal{K}^+$ be a connected component of
\begin{equation} \label{zz} \mathcal{K}=\{ [Z]\in P(V(\Bbb C)) : (Z,Z)=0,\, (Z,\bar Z)<0\},
\end{equation}
and let $O^+_V(\mathbb R)$ be the subgroup of elements in $O_V(\mathbb R)$ which preserve the components of $\mathcal K$.

For $Z\in V(\Bbb C)$, write $Z=X+iY$ with $X,Y\in V(\Bbb R)$.
Let $\Gamma \subseteq O^+_L:=O_L\cap O_V^+(\Bbb R) $ be a subgroup of finite index. Then $\Gamma$ acts on $\mathcal{K}$ discontinuously.
Set
$$\widetilde{\mathcal{K}}^+=\{ Z\in V(\Bbb C)-\{0\} : [Z]\in \mathcal{K}^+\}.
$$

Let $k\in  \frac 1 2 \mathbb Z$, and $\chi$ be a multiplier system of $\Gamma$. Then a meromorphic function $\Phi: \tilde{\mathcal{K}}^+\longrightarrow \Bbb C$ is called a {\em meromorphic modular form} of weight $k$ and
multiplier system $\chi$ for the group $\Gamma$, if
\begin{enumerate}
\item $\Phi$ is homogeneous of degree $-k$, i.e., $\Phi(cZ)=c^{-k}\Phi(Z)$ for all $c\in\Bbb C-\{0\}$,
\item $\Phi$ is invariant under $\Gamma$, i.e., $\Phi(\gamma Z)=\chi(\gamma)\Phi(Z)$ for all $\gamma\in \Gamma$.
\end{enumerate}

Define a map $\Omega(V^+(M)) \rightarrow \mathcal K$ by $z \mapsto \left [ \frac {(z, z)} {2m} \, e_1 +  e_2 + z \right ]$, where $\{ e_1, e_2 \}$ is the basis for $\begin{pmatrix} 0 & -m \\ -m & 0 \end{pmatrix}$. Then the space $\mathcal K^+$ is canonically identified with $\Omega(V^+(M))$.

Consider a meromorphic automorphic form
$\Phi(z)$ on $\Omega(V^+(M))$ with respect to a subgroup $\Gamma \subseteq O^+_L$ of finite index. The function  $\Phi(z)$ is called an 
{\em automorphic correction} of the Lorentzian Kac-Moody algebra $\mathfrak g$ if it has a Fourier expansion of the form
\begin{equation} \label{auto-corr}
\Phi(z) = \sum_{w \in W} \det (w) \left ( e \left ( -  (w (\rho), z) \right )- \sum_{a \in M \cap
     \mathfrak D,\, a\ne 0} m(a) \, e(- (w(\rho+a), z)) \right ),
\end{equation} 
where  $e(x)=e^{2 \pi i x}$ and $m(a) \in \mathbb Z$ for
     all $a \in M \cap \mathfrak D$.

We note that $O_V^+(\Bbb R)$ is the orthogonal group $O(n+1,2)$, and when $n=2$,  the automorphic forms on $O(3,2)$ are Siegel modular forms since $SO(3,2)$ is isogeneous to $Sp_4$. When $n=1$, which is our case in this paper, the automorphic forms on $O(2,2)$ are Hilbert modular forms  since $SO(2,2)$ is isogeneous to $SL_2\times SL_2$.
We also note that the denominator of $\mathfrak g$ is $\sum_{w \in W} \det (w) e \left ( -  (w (\rho), z) \right )$, which is not an automorphic form on $\Omega(V^+(M))$ in general, and one can see from \eqref{auto-corr} that the denominator of $\mathfrak g$ is corrected to be an automorphic form $\Phi(z)$.

An automorphic correction $\Phi(z)$ defines a generalized Kac-Moody superalgebra $\mathcal G$ as in \cite{gritsenko2002classification} so that the denominator of $\mathcal G$ is $\Phi(z)$. In particular, the function $\Phi(z)$ determines the set of imaginary simple roots of $\mathcal G$ in the following way:
First, assume that  $a \in M\cap \mathfrak D$ and $(a,a)<0$. If $m(a)>0$ then $a$ is an even imaginary simple root with multiplicity $m(a)$, and if $m(a)<0$ then $a$ is an odd imaginary simple root with multiplicity $-m(a)$. Next, assume that $a_0\in M\cap \mathfrak D$ is primitive and $(a_0,a_0)=0$. Then we define $\mu(na_0) \in \mathbb Z$, $n \in \mathbb N$ by 
\[  1 - \sum_{k=1}^\infty m(ka_0) t^k = \prod_{n=1}^\infty (1-t^n)^{\mu(na_0)} , \] where $t$ is a formal variable.
If $\mu(na_0) >0$ then $na_0$ is an even imaginary simple root with multiplicity $\mu(na_0)$; if $\mu(na_0)<0$ then $na_0$ is an odd imaginary simple root with multiplicity $-\mu(na_0)$. 

The generalized Kac-Moody superalgebra $\mathcal G$ will be also called an {\em automorphic correction} of $\mathfrak g$.
Using the denominator identity for $\mathcal G$, the automorphic form $\Phi(z)$ can be written as the infinite product
 \[ \Phi(z)= e(-(\rho, z)) \prod_{\alpha \in \Delta(\mathcal G)^+} (1 - e(-(\alpha, z)))^{\mathrm{mult}(\mathcal G, \alpha)} ,\] where $\Delta(\mathcal G)^+$ is the set of positive roots of $\mathcal G$ and $\mathrm{mult}(\mathcal G, \alpha)$ is the root multiplicity of 
$\alpha$ in $\mathcal G$.

\subsection{Rank $2$ hyperbolic Kac-Moody algebras}
Let $A=\begin{pmatrix} 2 & -a \\ -a & 2 \end{pmatrix}$ be a generalized Cartan matrix with $a \ge 3$, and  $\mathcal H(a)$ be the hyperbolic Kac-Moody algebra associated with the matrix $A$. We write $\mathfrak g = \mathcal H(a)$ if there is no need to specify $a$. Let $\{ h_1, h_2 \}$ be the set of simple coroots in the Cartan subalgebra $\mathfrak h = \mathbb C h_1 + \mathbb C h_2 \subset \mathfrak g$. Let $\{ \alpha_1 , \alpha_2 \} \subset \mathfrak h^*$ be the set of simple roots, and $Q=\mathbb Z \alpha_1 + \mathbb Z \alpha_2$ be the root lattice, and define $\mathfrak h^*_{\mathbb Q} =\mathbb Q \alpha_1 + \mathbb Q \alpha_2$ and $\mathfrak h^*_{\mathbb R} =\mathbb R \alpha_1 + \mathbb R \alpha_2$.  The set of roots of $\mathfrak g$ will be denoted by $\Delta$, and the set of positive (resp. negative) roots by $\Delta^+$ (resp. by $\Delta^-$), and the set of real (resp. imaginary) roots by $\Delta_{\mathrm{re}}$ (resp. by $\Delta_{\mathrm{im}}$). We will use the notation $\Delta^+_{\mathrm{re}}$ to denote the set of positive real roots. Similarly, we use $\Delta^+_{\mathrm{im}}$, $\Delta^-_{\mathrm{re}}$ and $\Delta^-_{\mathrm{im}}$.

Let $F=\mathbb Q(\sqrt {a^2-4})$, and let $N$ be the discriminant of $F$. We define $s \in \mathbb N$ by  $a^2-4=Ns^2$. We keep the notations in Section \ref{Notations} for the quadratic field $F$.
We set \[ \eta = \frac {a + \sqrt {a^2-4}} 2 = \frac {a+s\sqrt N} 2 .\] Then we have $\eta' = \eta^{-1}$ and $1+\eta^2=a \eta$.
The simple reflection corresponding to $\alpha_i$ in the root system of $\mathfrak g$ is denote by $r_i$ ($i=1, 2$), and the Weyl group by $W$. The eigenvalues of $r_1r_2$ as a linear transformation on $\mathfrak h^*$ are $\eta^2$ and $\eta^{-2}$. Let $\gamma^+$ be an eigenvector for $\eta^2$ and we set $\gamma^- =r_2 \gamma^+$. Then $\gamma^-$ is an eigenvector for $\eta'^{2}$. Specifically, we choose
\[ \gamma^+ = \frac {\alpha_1 + \eta' \alpha_2} s \quad \text{ and } \quad \gamma^- = \frac {\alpha_1 + \eta \alpha_2} s .\]
We define a symmetric bilinear form $(\cdot, \cdot)$ on $\mathfrak h^*$ to be given by the Cartan matrix $A$ with respect to $\{ \alpha_1, \alpha_2 \}$. Then we have $(\gamma^+, \gamma^+)=(\gamma^-, \gamma^-)=0$ and $(\gamma^+, \gamma^-)=-N$.

We will use the column vector notation for the elements in $\mathfrak h^*$ with respect to the basis $\{\gamma^+, \gamma^- \}$, i.e. we write $\binom  x y $ for $x \gamma^+ + y \gamma^-$. Then we have
\[ \alpha_1 = \frac 1 {\sqrt{N}} \begin{pmatrix} \eta \\ -  \eta' \end{pmatrix} \quad \text{ and } \quad \alpha_2= \frac 1 {\sqrt{N}}  \begin{pmatrix} - 1 \\  1 \end{pmatrix} . \] It follows that $\mathfrak h^*_{\mathbb Q}= \{ \binom x {\bar x} \, | \, x \in F\}$.
 A symmetric bilinear form $\langle \cdot , \cdot \rangle$ on $F$ is defined by $\langle x, y \rangle= - N \, \mathrm{tr}(xy')$. We define a map $\psi : \mathfrak h^*_{\mathbb Q} \rightarrow F$ by $\binom  x {\bar x}  \mapsto x$. Then the map $\psi$ is an isometry from $(\mathfrak h^*_{\mathbb Q}, (\cdot, \cdot))$ to $(F, \langle \cdot, \cdot \rangle)$.
 In particular, the root lattice $Q=\mathbb Z \alpha_1 + \mathbb Z \alpha_2$ is mapped onto a sublattice of $\mathcal O/\sqrt N$. 

Let $\omega_i$ $(i=1,2)$ be the fundamental weights of $\mathfrak g$. Then we have $\omega_1 = \frac 1 {4-a^2} ( 2 \alpha_1 + a \alpha_2)$ and $\omega_2 = \frac 1 {4-a^2}(a \alpha_1 + 2 \alpha_2)$. In the column vector notation,
\[ \omega_1 = \frac {-1} {sN} \begin{pmatrix} 1 \\ 1 \end{pmatrix} \quad \text{ and } \quad \omega_2= \frac {-1} {sN}  \begin{pmatrix} \eta \\ \bar \eta \end{pmatrix} . \] We define \[ \rho := - (\omega_1 + \omega_2) = \frac 1 {sN} \begin{pmatrix} 1+\eta \\ 1 + \bar \eta \end{pmatrix} . \]

 The simple reflections have the matrix representations
\[ r_1 = \begin{pmatrix} 0 &\eta^2 \\ \bar \eta^2 & 0 \end{pmatrix} \quad \text{ and } \quad r_2 = \begin{pmatrix} 0 & 1 \\ 1 & 0 \end{pmatrix} . \] The Weyl group $W$ also acts on $F$ by \[ r_1 x= \eta^2 \bar x \quad \text{ and } \quad r_2 x = \bar x \qquad \text{ for } x \in F,\] so that the isometry $\psi$ is $W$-equivariant.
Since $W=\{ (r_1r_2)^i, r_2(r_1r_2)^i \, | \, i \in \mathbb Z \}$, we calculate the set of positive real roots and obtain
\[ \Delta^+_{\mathrm{re}} = \left \{ \frac 1 {\sqrt{N}} \begin{pmatrix}  \eta^{j} \\ - \bar \eta^{j} \end{pmatrix} (j > 0),  \qquad
\frac 1 {\sqrt{N}} \begin{pmatrix}  - \bar \eta^{j} \\  \eta^{j} \end{pmatrix} (j \ge 0)  \right \} . \] We can also obtain a description of the set of positive imaginary roots. See  \cite{kang1995rank} for details.

\subsection{Hilbert modular forms} \label{subsec-H}
 
 We put $M=\psi^{-1}(\frak d^{-1}) \subset \frak h^*_{\mathbb Q} $.  Then $M$ is of signature $(1,1)$ and the Kac-Moody algebra $\mathfrak g= \mathcal H (a)$ is Lorentzian. We take the Weyl group $W$ for the reflection group of $M$, and choose the cone
\begin{equation} \label{eqn-V} V^+(M) = \{ x \gamma^+ + y \gamma^- \in \mathfrak h^*_{\mathbb R} \, |\,  x >0 , \ y>0 \}. \end{equation} We set $\Pi = \{ \alpha_1, \alpha_2 \}$ and obtain the Weyl chamber \[ \mathfrak D =\{ \beta \in V^+(M) \, | \, (\beta, \alpha_i) \le 0 , \ i=1,2 \} = \mathbb R_{\le 0}\, \omega_1+ \mathbb R_{\le 0}\, \omega_2 . \] The Weyl vector is given by  $\rho = -(\omega_1 +\omega_2)$. 

From our choice of $V^+(M)$ in \eqref{eqn-V}, we have the complexified cone \[ \Omega(V^+(M))= M \otimes \mathbb R + i V^+(M) = \left \{ \binom {z_1} {z_2} : Im(z_1)>0, \ Im(z_2)>0 \right \} \subset \mathfrak h^*\] with respect to the basis $\{ \gamma^+ , \gamma^- \}$. Then $\Omega(V^+(M))$  is naturally identified with $\mathbb H^2$. We choose the extended lattice $L=\begin{pmatrix} 0 & -N \\ -N & 0 \end{pmatrix} \oplus M$. Now it can be shown that an automorphic form on $\Omega(V^+(M))$ is a Hilbert modular form through the identification $\Omega(V^+(M) \cong \mathbb H^2$. See \cite{bruinier2008hilbert, kim2012rank} for details. 
As we identify $\mathbb H^2$ with $\Omega (V^+(M)) \subset \mathfrak h^*$,  the Weyl group $W$ acts on $\mathbb H^2$; in particular, we have \[ r_1(z_1, z_2) = (\eta^2 z_2,  \eta'^2 z_1) \quad \text{ and } \quad r_2(z_1, z_2) =(z_2, z_1) . \] We also define a paring on $F \times \mathbb H^2$ by
\begin{equation}\label{pairing}
(\nu, z)= -N\,(\nu z_2+ \nu' z_1),
\end{equation} 
for $\nu \in F$ and $z=(z_1, z_2) \in \mathbb H^2.$

Our automorphic correction will be a Hilbert modular form with respect to the congruence subgroup $\mathbf \Gamma_{0}(N)$ defined by
\[ \mathbf \Gamma_{0}(N) = \left \{ \begin{pmatrix} a & b \\ c & d \end{pmatrix} \in SL_2(\mathcal O_F) : a, b, d \in \mathcal O_F, \ c \in (N) \right \} \subset O^+_L ,\] where $(N) \subset \mathcal O_F$ is the principal ideal generated by $N$. 
We will need the following lemma, whose proof is essentially the same as that of Lemma 5.13 of \cite{kim2012rank}.
\begin{Lem} \label{lem-Nz}
Let $g(z)$ be a Hilbert modular form with respect to $SL_2(\mathcal O_F)$. Define $f(z)=\overline g(Nz)$, where $\overline g(z_1, z_2) = g(z_2, z_1)$. Then the function $f(z)$ is a Hilbert modular form with respect to the congruence subgroup $\mathbf \Gamma_0(N)$.
\end{Lem}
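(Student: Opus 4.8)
The plan is to follow the proof of \cite[Lemma 5.13]{kim2012rank} essentially verbatim; the only point that genuinely needs re-examination for our discriminants is that a certain matrix conjugation still lands inside $SL_2(\mathcal O_F)$. Given $\gamma=\begin{pmatrix} a & b \\ c & d\end{pmatrix}\in\mathbf\Gamma_0(N)$, so that $c\in(N)$, set
\[\gamma^{\ast}=\begin{pmatrix} a & Nb \\ c/N & d\end{pmatrix}.\]
Then $c/N\in\mathcal O_F$ and $Nb\in\mathcal O_F$, while $\det\gamma^{\ast}=ad-Nb\cdot(c/N)=ad-bc=1$, so $\gamma^{\ast}\in SL_2(\mathcal O_F)$, and likewise its Galois conjugate $(\gamma^{\ast})'\in SL_2(\mathcal O_F)$. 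Geometrically, $\gamma^{\ast}$ is the conjugate of $\gamma$ by the diagonal scaling $z\mapsto Nz$; this is exactly the device used in \cite{kim2012rank}, and the verification that $\gamma^{\ast}$ has integral entries is the only place where the particular $N$ matters (so it works equally well when $N$ is composite or $N_1\equiv 2\imod 4$).

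Next I would record the elementary intertwining relations on $\mathbb H\times\mathbb H$. Write $s(z_1,z_2)=(Nz_1,Nz_2)$ for the scaling and $\tau(z_1,z_2)=(z_2,z_1)$ for the coordinate swap, so that $f=g\circ\tau\circ s$ with $s,\tau$ commuting. From $N\cdot\frac{aw+b}{cw+d}=\frac{a(Nw)+Nb}{(c/N)(Nw)+d}$, applied in each archimedean factor through the two real embeddings of $F$, one reads off $s\circ\gamma=\gamma^{\ast}\circ s$ and $\tau\circ\gamma^{\ast}=(\gamma^{\ast})'\circ\tau$, where $\gamma$ and $\gamma^{\ast}$ act by the usual Hilbert modular action and $s,\tau$ are the maps just defined. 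Composing, $f\circ\gamma=g\circ(\gamma^{\ast})'\circ\tau\circ s$; writing $k$ for the weight of $g$ and using that $g$ is modular under $(\gamma^{\ast})'\in SL_2(\mathcal O_F)$, we get
\[f(\gamma z)=j\bigl((\gamma^{\ast})',(Nz_2,Nz_1)\bigr)^{k}\,g(Nz_2,Nz_1)=j\bigl((\gamma^{\ast})',(Nz_2,Nz_1)\bigr)^{k}f(z),\]
where $j$ is the Hilbert automorphy factor. A short computation shows that in $j((\gamma^{\ast})',(Nz_2,Nz_1))$ the factors of $N$ produced by the scaling cancel the $1/N$ in the $(2,1)$ entry of $\gamma^{\ast}$, leaving exactly $(cz_1+d)(c'z_2+d')$, which is the automorphy factor of $\gamma$ on $\mathbb H\times\mathbb H$. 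Hence $f$ satisfies the transformation law of a Hilbert modular form of weight $k$ for $\mathbf\Gamma_0(N)$, inheriting the multiplier system of $g$ if there is one.

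Finally I would note that $f(z)=\overline g(Nz)$ is holomorphic on $\mathbb H\times\mathbb H$ --- or meromorphic, in the case of the Borcherds products we apply this to --- because it is $g$ precomposed with the biholomorphism $z\mapsto Nz$ and the swap; its divisor is the preimage of that of $g$ and is $\mathbf\Gamma_0(N)$-invariant by the transformation law just established, and holomorphy at the cusps is automatic (in the holomorphic case by the Koecher principle in dimension two), exactly as in \cite{kim2012rank}. I expect the only real obstacle to be bookkeeping: keeping straight the swap $\tau$, the Galois conjugation it forces on $\gamma^{\ast}$, and the powers of $N$, so that the automorphy factor of $(\gamma^{\ast})'$ at $(Nz_2,Nz_1)$ collapses precisely to that of $\gamma$ at $(z_1,z_2)$. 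There is no conceptual difficulty once the conjugating matrix $\gamma^{\ast}$ has been identified.
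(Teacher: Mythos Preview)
Your proof is correct and is precisely the approach the paper indicates: the paper does not write out a proof at all but simply says it is essentially the same as that of \cite[Lemma 5.13]{kim2012rank}, and you have reproduced that argument with the right bookkeeping. The conjugation $\gamma\mapsto\gamma^{\ast}$, the intertwining relations $s\circ\gamma=\gamma^{\ast}\circ s$ and $\tau\circ\gamma^{\ast}=(\gamma^{\ast})'\circ\tau$, and the collapse of the automorphy factor of $(\gamma^{\ast})'$ at $(Nz_2,Nz_1)$ to $(cz_1+d)(c'z_2+d')$ are all carried out correctly, and the observation that nothing hinges on $N$ being prime is exactly the point of restating the lemma here.
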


\subsection{Construction of automorphic correction}
Assume that $1 < N \le 21$. Then the modular form $f_1$ exists by Lemma \ref{lem-own}. In particular, we consider the following three cases: 
(1) $N=12$, $a=4$; (2) $N=8$, $a=6$; (3) $N=21$, $a=5$.
The fundamental units $\varepsilon_0$ are: $2+\sqrt{3}$, $1+\sqrt{2}$, $\frac {5+\sqrt{21}}2$, respectively, and $\eta=\varepsilon_0$ for $N=12, 21$, and $\eta=\varepsilon_0^2$ for $N=8$.

We choose the Weyl chamber $\mathcal W$ attached to $f_1$ which contains the point $(1, \varepsilon_0)$. Then the Weyl vector is given by the formula \eqref{Weyl-vector}. In the case $N(\varepsilon_0)=-1$, the point $(\varepsilon_0^{-1}, \varepsilon_0)$ lies in the same Weyl chamber $\mathcal W$. (See Example 3.11 in \cite{bruinier2008hilbert}.) 

Recall that we have the isometry $\psi: \mathfrak h^*_{\mathbb Q} \rightarrow F$ by $\binom \nu {\nu'} \mapsto  \nu$. One can check that 
\begin{equation} \label{eqn-rw}   \psi (\rho)= \frac 1 {sN} (1 +\eta) = \rho_{f_1, \mathcal W} , \end{equation} and we write $\rho = \rho_{f_1, \mathcal W}$ if there is no peril of confusion.

First, assume that $\nu \gg 0$ and $(\nu, \mathcal W) >0$ for $\nu \in \mathfrak d^{-1}$. Then  $N(\nu)>0$ and $\langle \nu, \nu \rangle = -N \text{tr}(\nu\nu') <0$. Thus $\nu$ corresponds to an imaginary root of $\mathcal H(a)$ by Proposition 5.10 \cite{kac1990infinite}. We can also check $\langle \rho, \nu \rangle <0$ using $\nu+\varepsilon_0 \nu' >0$ or $\varepsilon_0^{-1}\nu+ \varepsilon_0 \nu'>0$ if $N(\varepsilon_0)=-1$. Thus $\nu$ is positive, i.e. $\nu \in \psi(\Delta^+_{\rm{im}})$.

Next, assume that $\nu \not\gg 0$ and $(\nu, \mathcal W) >0$ for $\nu \in \mathfrak d^{-1}$. Then $N(\nu)<0$ and $a(N\nu \nu') \neq 0$ only for $\nu$ with $N(\nu)=\nu \nu'=-1/N$, in which case $s(N\nu \nu')a(N\nu \nu')=1$. Further, $\langle \nu, \nu \rangle = 2$, and $\nu$ corresponds to a positive real root of $\mathcal H(a)$, i.e. $\nu \in \psi(\Delta^+_{\rm{re}})$. 

Now, from the above observations, the Borcherds product \eqref{eqn-BB} can be written:
\begin{align*}  \Psi(z_1,z_2) &=e(\rho z_1+\rho'z_2)\prod_{\substack{ \nu\in\mathfrak d^{-1} \\ (\nu,\mathcal W)>0} } \left(1-e(\nu z_1+\nu' z_2) \right)^{s(N\nu\nu')a(N\nu\nu')} \\ &= e(\rho z_1+\rho'z_2)\prod_{\substack{ \nu\in\mathfrak d^{-1} \\ \nu \gg 0 } }\left(1-e(\nu z_1+\nu' z_2) \right)^{s(N\nu\nu')a(N\nu\nu')} \\ & \phantom{LLLLLLLLLLLLLL} \times \prod_{\substack{ \nu\in\mathfrak d^{-1}, \nu + \varepsilon_0 \nu' >0 \\ N(\nu)=-1/N}}\left(1-e(\nu z_1+\nu' z_2) \right) \\
&= e(\rho z_1+\rho'z_2)\prod_{\nu\in \psi(\Delta^+_{\rm{im}}) }\left(1-e(\nu z_1+\nu' z_2) \right)^{s(N\nu\nu')a(N\nu\nu')} 
\prod_{\nu\in \psi(\Delta^+_{\rm{re}}) }\left(1-e(\nu z_1+\nu' z_2) \right) \\
\end{align*}
Define $\Phi(z) =\Phi(z_1,z_2) =\Psi(Nz_2,Nz_1)$. Then by Lemma \ref{lem-Nz}, $\Phi$ is a Hilbert modular form with respect to 
$\mathbf\Gamma_0(N)$. By (\ref{pairing}), $\Phi$ can be written as
$$\Phi(z)=e(-(\rho,z))\prod_{\nu\in \psi(\Delta^+_{\rm{im}}) }
(1-e(-(\nu,z))^{s(N\nu \nu')a(N\nu \nu')} \prod_{\nu\in \psi(\Delta^+_{\mathrm{re}}) } \left(1-e(-(\nu,z))\right).
$$

As in \cite{kim2012rank}, we can prove
\[ \Phi(wz) =\det(w) \Phi (z) \quad \text{ for } w \in W. \]
This in turn implies that $\Phi(z)$ can be written as
$$\Phi(z)=\sum_{w\in W} \left(e(-(w(\rho),z))-\sum_{\nu\in M\cap \mathfrak D, \nu\ne 0} m(\nu) e(-(w(\rho+\nu),z))\right).
$$
This is exactly the form for the automorphic correction in (\ref{auto-corr}), and hence it provides an automorphic correction for $\mathcal H(a)$. So we have obtained:

\begin{Thm}\label{auto-corr}
Let $\mathcal H(a)$ be the rank $2$ symmetric hyperbolic Kac-Moody algebra, and let $N$ be the discriminant of the quadratic field $\mathbb Q(\sqrt{a^2-4})$. Assume that $1<N\le 21$. Then the Hilbert modular form $\Phi$
provides an automorphic correction for the hyperbolic Kac-Moody algebra $\mathcal H(a)$. In particular, there exists a generalized Kac-Moody superalgebra $\widetilde{\mathcal H}$ whose denominator function is the Hilbert modular form $\Phi$. 
\end{Thm}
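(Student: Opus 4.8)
The plan is to assemble the pieces that have already been set up in Section \ref{sec-AutCor} into the precise form demanded by the definition of automorphic correction in \eqref{auto-corr}. First I would invoke Lemma \ref{lem-own} to guarantee that the weakly holomorphic modular form $f_1=q^{-1}+O(1)\in A^\epsilon(N,0,\chi_D)$ exists whenever $1<N\le 21$, and then use the integrality results of Section \ref{sec-integ} (or rather the fact that $s(-1)a(-1)=1\in\mathbb Z$, which is all Theorem \ref{thm-BB} requires) to produce the meromorphic Hilbert modular form $\Psi(z_1,z_2)$ for $\mathbf\Gamma_F=SL_2(\mathcal O_F)$ with the Borcherds product expansion \eqref{eqn-BB} and Weyl vector $\rho_{f_1,\mathcal W}$ given by \eqref{Weyl-vector}, for the Weyl chamber $\mathcal W$ containing $(1,\varepsilon_0)$.

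Next I would carry out the root-theoretic identification already sketched: using the isometry $\psi:\mathfrak h^*_{\mathbb Q}\to F$ and the computation \eqref{eqn-rw} that $\psi(\rho)=\rho_{f_1,\mathcal W}$, I would match the two types of factors in \eqref{eqn-BB}. For $\nu\in\mathfrak d^{-1}$ with $(\nu,\mathcal W)>0$ and $\nu\gg 0$ one has $N(\nu)>0$, so $\langle\nu,\nu\rangle<0$ and $\langle\rho,\nu\rangle<0$, placing $\nu\in\psi(\Delta^+_{\mathrm{im}})$; for $\nu\not\gg 0$ with $(\nu,\mathcal W)>0$, the $\epsilon$-condition on $f_1$ forces $a(N\nu\nu')\ne 0$ only when $N(\nu)=-1/N$, where $s(N\nu\nu')a(N\nu\nu')=1$ and $\langle\nu,\nu\rangle=2$, so $\nu\in\psi(\Delta^+_{\mathrm{re}})$ — and conversely every positive real root of $\mathcal H(a)$ arises this way, by the explicit description of $\Delta^+_{\mathrm{re}}$. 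This rewrites $\Psi$ as a product over $\psi(\Delta^+_{\mathrm{im}})\cup\psi(\Delta^+_{\mathrm{re}})$. Then I set $\Phi(z_1,z_2)=\Psi(Nz_2,Nz_1)$ and apply Lemma \ref{lem-Nz} to conclude $\Phi$ is a Hilbert modular form for $\mathbf\Gamma_0(N)$, and use the pairing \eqref{pairing} to rewrite $\Phi(z)=e(-(\rho,z))\prod_{\nu\in\psi(\Delta^+_{\mathrm{im}})}(1-e(-(\nu,z)))^{s(N\nu\nu')a(N\nu\nu')}\prod_{\nu\in\psi(\Delta^+_{\mathrm{re}})}(1-e(-(\nu,z)))$.

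The step I expect to be the main obstacle is establishing the $W$-equivariance $\Phi(wz)=\det(w)\Phi(z)$ for all $w\in W$, from which the passage to the sum-over-Weyl-group form \eqref{auto-corr} follows by the standard Weyl-denominator manipulation (expanding the real-root product, grouping orbits under $W$, and reading off integers $m(\nu)$ for $\nu\in M\cap\mathfrak D$). Since $W=\{(r_1r_2)^i,\,r_2(r_1r_2)^i\}$ is generated by $r_1,r_2$, and $r_2$ acts by swapping coordinates while $r_1$ acts by $(z_1,z_2)\mapsto(\eta^2 z_2,\eta'^2 z_1)$, it suffices to check the transformation law under these two generators; here I would follow the argument of \cite{kim2012rank} verbatim, using that $\Psi$ is already modular for the full Hilbert modular group $SL_2(\mathcal O_F)$ (so that the $\eta^2$-scaling and the swap are realized inside $\mathbf\Gamma_F$ up to the known unitary character), and that $\rho=-(\omega_1+\omega_2)$ satisfies $w(\rho)=\rho-\sum$ of the appropriate positive real roots, giving the $\det(w)$ sign. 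Once $\Phi(wz)=\det(w)\Phi(z)$ and the product expansion are in hand, comparison with \eqref{auto-corr} is immediate, and the existence of the generalized Kac-Moody superalgebra $\widetilde{\mathcal H}$ with denominator $\Phi$ follows from the construction recalled in Section \ref{automorphic} (cf. \cite{gritsenko2002classification}), completing the proof.
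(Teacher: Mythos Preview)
Your proposal is correct and follows essentially the same route as the paper: invoke Lemma \ref{lem-own} to get $f_1$, apply Theorem \ref{thm-BB} and \eqref{Weyl-vector}, identify $\psi(\rho)=\rho_{f_1,\mathcal W}$, split the product over $(\nu,\mathcal W)>0$ into the $\nu\gg 0$ (imaginary) and $\nu\not\gg 0$ (real) factors, pass to $\Phi(z)=\Psi(Nz_2,Nz_1)$ via Lemma \ref{lem-Nz}, and then cite \cite{kim2012rank} for the $W$-equivariance $\Phi(wz)=\det(w)\Phi(z)$ that yields the sum form \eqref{auto-corr}. The only differences are cosmetic: you spell out the converse direction for real roots and sketch how the generator check for $r_1,r_2$ would go, but the underlying argument is identical.
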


\begin{Rmk} \label{rmk-rmk}
With the results in \cite{kim2012rank}, we have established automorphic correction for the hyperbolic Kac-Moody algebra $\mathcal H(a)$ corresponding to each of the fundamental discriminants $1 <N \le 21$. We summarize it in a table:
\[\begin{array}{c||c|c|c|c|c|c}
N& 5&8&12&13&17&21\\
\hline
a &3& 6&4&11&66&5\\
\end{array}
\] 
If $N > 21$, the modular form $f_1$ does not exist by Lemma \ref{lem-own}, 
and we need a new idea to construct automorphic correction.
\end{Rmk}

\section{Asymptotics and Positivity of Fourier Coefficients} \label{sec-Asym}

In this section, we obtain asymptotics of Fourier coefficients of the modular forms $f_m$ defined in Section 2. Note that the Fourier coefficients of $f_1$ are root multiplicities of the generalized Kac-Moody superalgebra $\widetilde{\mathcal H}$ with some modification.

We call a cusp $s\sim\frac{1}{c}$  with $c\mid N$ \emph{irregular} if $1<(c,N_2)<N_2$. Here $N_2$ is the $2$-part of $N$. We first prove a lemma on the holomorphy of $f_N$ at irregular cusps.
\begin{Lem}\label{Holo-Irr}
If $f_N$ exists, it is holomorphic at irregular cusps.
\end{Lem}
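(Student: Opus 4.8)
The plan is to transfer the statement to the vector-valued side, exactly as in the proof of Lemma \ref{Uniqueness-2mod4}, and then to reduce holomorphy at a cusp to the vanishing of a Gauss sum of the discriminant form. First note that an irregular cusp can occur only when $4\mid N$: if $N$ is odd then $N_2=1$ and the inequality $1<(c,N_2)<N_2$ is vacuous. So assume $N_1\equiv 2$ or $3\imod 4$, put $r=v_2(N)\in\{2,3\}$, and observe that, since $c\mid N$, the cusp $\tfrac1c$ is irregular precisely when $0<v_2(c)<r$, equivalently $0<v_2(N/c)<r$. Let $\psi\colon A^\epsilon(N,0,\chi_D)\to\mathcal A^{\text{inv}}(0,\rho_D)$ be the isomorphism of \cite{zhang2013vector}, with inverse $\phi$, and set $F=\psi(f_N)$. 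Since $0$ is the only isotropic element of $D$ --- on each $p$-component $D_p$ the quadratic form $q_p$ is anisotropic, so $q(\gamma)\equiv 0\imod 1$ forces $\gamma=0$ --- Theorem 4.16 of \cite{zhang2013vector}, applied to the principal part $\tfrac{1}{s(N)}q^{-N}$ of $f_N$, shows that the whole principal part of $F$ sits in the single component $e_0$: one has $F_0=\tfrac{1}{s(N)}q^{-1}+O(1)$ and $F_\gamma$ holomorphic at $\infty$ for every $\gamma\neq 0$.

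Next I would compute the expansion of $f_N=\phi(F)$ at a cusp $\tfrac1c$ with $c\mid N$. Writing $\phi(F)(\tau)=\kappa\sum_{\gamma\in D}F_\gamma(N\tau)$ for the relevant normalizing constant $\kappa\neq 0$ and taking the scaling matrix $\sigma_{1/c}=\left(\begin{smallmatrix}1&0\\c&1\end{smallmatrix}\right)$, one uses the factorization
\[
\begin{pmatrix}N&0\\c&1\end{pmatrix}=\begin{pmatrix}N/c&-1\\1&0\end{pmatrix}\begin{pmatrix}c&1\\0&N/c\end{pmatrix}=\bigl(T^{N/c}S\bigr)\begin{pmatrix}c&1\\0&N/c\end{pmatrix},
\]
and the fact that $T^{N/c}S\in SL_2(\mathbb Z)$, to get from the Weil representation
\[
\bigl(f_N|_0\sigma_{1/c}\bigr)(\tau)=\kappa\sum_{\delta\in D}\Bigl(\sum_{\gamma\in D}\rho_D(T^{N/c}S)_{\gamma\delta}\Bigr)F_\delta\!\Bigl(\tfrac{c\tau+1}{N/c}\Bigr).
\]
By the previous paragraph only $F_0$ carries a negative power, so $f_N$ is holomorphic at $\tfrac1c$ if and only if $\sum_{\gamma}\rho_D(T^{N/c}S)_{\gamma 0}=0$. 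A one-line computation from the formulas for $\rho_D(S)$ and $\rho_D(T)$ gives $\rho_D(T^{N/c}S)_{\gamma 0}=\tfrac1{\sqrt N}\,e\!\bigl(\tfrac Nc q(\gamma)\bigr)$, so that
\[
f_N\ \text{is holomorphic at}\ \tfrac1c\quad\Longleftrightarrow\quad G_D(N/c):=\sum_{\gamma\in D}e\!\Bigl(\tfrac Nc\,q(\gamma)\Bigr)=0.
\]

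It then remains to evaluate the Gauss sum, which factors as $G_D(t)=\prod_{p\mid N}G_{D_p}(t)$. For an odd prime $p\mid N$ we have $p\,\|\,N$ and $D_p\cong\mathbb Z/p$ with an anisotropic form, and $G_{D_p}(N/c)$ equals $p$ when $p\mid N/c$ and a classical quadratic Gauss sum when $p\nmid N/c$; in either case it is non-zero. Hence $G_D(N/c)=0$ if and only if the $2$-adic factor $G_{D_2}(N/c)$ vanishes. Here $D_2$ is $(\mathbb Z/2)^2$ with $q(x,y)=\tfrac{x^2+y^2}{4}$ when $N_1\equiv 3\imod 4$, and $\mathbb Z/2\oplus\mathbb Z/4$ with $q(x,y)=\tfrac{x^2}{4}-\tfrac{m\,y^2}{8}$ (where $N_1=2m$) when $N_1\equiv 2\imod 4$. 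Writing $N/c=2^{j}u$ with $u$ odd, a direct evaluation shows $G_{D_2}(2^{j}u)=0$ for every $j$ with $0<j<r$: when $j=1$ the $\mathbb Z/2$-factor $\sum_{x\in\mathbb Z/2}e(u\,x^2/2)=1+e(u/2)=0$ already forces the vanishing, and when $j=2$ (which occurs only if $r=3$) the $\mathbb Z/4$-factor $\sum_{x\in\mathbb Z/4}e(-mu\,x^2/2)=1-1+1-1=0$ does. Since $0<v_2(N/c)<r$ is precisely the condition that $\tfrac1c$ be irregular, this yields $G_D(N/c)=0$, hence the holomorphy of $f_N$ at every irregular cusp.

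The step I expect to be the main obstacle is the first one: showing that the principal part of $F=\psi(f_N)$ is concentrated in the component $e_0$. This rests on the precise Fourier-coefficient dictionary of Theorem 4.16 in \cite{zhang2013vector} together with the fact --- to be read off from the description of $D=\mathfrak d^{-1}/\mathcal O_F$ --- that for a fundamental discriminant $N$ the only isotropic element of $D$ is $0$, so that the $q^{-N}$-term of $f_N$ cannot leak into any component $F_\gamma$ with $\gamma\neq 0$. (Equivalently, one can deduce it from the obstruction statement of Lemma \ref{obstruction}, which, since $f_N$ is assumed to exist, forces $b(N)=0$ for every $g=\sum_{n>0}b(n)q^n\in S^{\epsilon^*}(N,2,\chi_D)$, together with its vector-valued counterpart and the uniqueness Lemma \ref{Uniqueness-2mod4}.) Once this is in hand everything reduces to the explicit Gauss-sum computation above; one should also check, using the exact form of $\phi$ in \cite{zhang2013vector}, that no character twist enters the sum $\sum_\gamma e(\tfrac Nc q(\gamma))$, which for the relevant discriminants $N=8,12$ it does not.
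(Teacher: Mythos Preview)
Your argument is correct and reaches the same conclusion as the paper's proof, but by a different and more elementary route. Both proofs pass to the vector-valued form $F=\psi(f_N)$ and use that the pole of $f_N$, being supported at $q^{-N}$, lands entirely in the component $F_0$ (equivalently, the only isotropic element of $D$ is $0$). From there the arguments diverge. The paper observes that the expansion of $f_N$ at an irregular cusp $\tfrac1c$ amounts, after a Fricke involution, to $F_0|M'$ for some $M'\in SL_2(\mathbb Z)$ with $(c',N)=N/c$, and then invokes Scheithauer's Theorem~4.7 to say that $F_0|M'$ lies in $\mathrm{span}\{F_\beta:\beta\in D^{c'*}\}$; the structural input is that for irregular $c$ the $2$-adic shift $x_{c'}$ is nonzero, forcing $q(\beta)\neq 0$ for every such $\beta$, so all contributing components are holomorphic. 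You instead compute directly: using the factorization $\bigl(\begin{smallmatrix}N&0\\c&1\end{smallmatrix}\bigr)=T^{N/c}S\bigl(\begin{smallmatrix}c&1\\0&N/c\end{smallmatrix}\bigr)$ and the explicit formulas for $\rho_D(T)$, $\rho_D(S)$, you extract the coefficient of $F_0$ in the cusp expansion as the Gauss sum $G_D(N/c)=\sum_{\gamma}e\bigl(\tfrac Nc q(\gamma)\bigr)$, factor it over primes, and kill the $2$-part by a two-line calculation for each of $j=1$ (and $j=2$ when $8\mid N$).

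What each approach buys: your computation is self-contained --- it needs only the defining formulas of the Weil representation and an explicit evaluation of a quadratic Gauss sum on $(\mathbb Z/2)^2$ or $\mathbb Z/2\oplus\mathbb Z/4$ --- and thus avoids importing Scheithauer's machinery. The paper's route is shorter once that machinery is in hand and makes transparent \emph{which} components $F_\beta$ actually appear at the cusp (namely those with $\beta\in D^{c'*}$), which your Gauss-sum vanishing does not. Your hedge about the precise shape of $\phi$ is harmless: writing $f_N(\tau)=\kappa\sum_\gamma F_\gamma(N\tau)$ is equivalent, via the Fricke involution, to $f_N|W(N)\propto F_0$, which is exactly the identification the paper uses; no extra character twist survives in the $\delta=0$ coefficient.
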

\begin{proof}
To save space, we employ the notations in \cite{scheithauer2009weil}. Let $F=\sum_\gamma F_\gamma e_\gamma$ be the vector valued modular form that corresponds to $f_N$ under the isomorphism in Theorem 4.16 of \cite{zhang2013vector}. Let $s\sim \frac{1}{c}$ be irregular. To obtain the holomorphy at $s$, we only need to consider that of $F_0|W(N)M$ at $\infty$ for some $M\in\text{SL}_2(\mathbb Z)$ with $M\infty\sim s$, where $W(N)= {\scriptsize \begin{pmatrix}0&-1\\N&0\end{pmatrix} }$.  Because $W(N)M\infty\sim \frac{1}{N/c}$, there exists $M'\in\text{SL}_2(\mathbb Z)$ and a parabolic matrix $P\in \text{GL}_2(\mathbb Q)$, such that $F_0|W(N)M=F_0|M'P$. Since $P$ does not affect the holomorphy, we only need to consider $F_0|M'$. It is clear that the left lower entry $c'$ of $M'$ satisfies $(c',N)=\frac{N}{c}$.

Now by Theorem 4.7 in \cite{scheithauer2009weil}, $F_0|M'$ belongs to the space spanned by $F_\beta$ with $\beta\in D^{c'*}$. (See page 6 in \cite{scheithauer2009weil} for explanation of notations.) From the particular form of the $2$-adic component of our discriminant form and the fact that $1<(c',N_2)<N_2$, we have $x_{c'}\neq 0$ and $q(\beta)\neq 0$ for any $\beta\in D^{c'*}$. Now from the explicit isomorphism in Theorem 4.16 of \cite{zhang2013vector}, we see that $F_\beta$ is holomorphic for any such $\beta$, since $F_\beta$ only collects the Fourier coefficients $a(n)$ of $f_N$ such that $n\equiv Nq(\beta)\imod N$. Hence the holomorphy of $f_N$ at irregular cusps follows.
\end{proof}

We recall the result of J. Lehner \cite{lehner1964discontinuous} on Fourier coefficients of modular forms using the method of Hardy-Ramanujan-Rademacher.
We refer to \cite{lehner1964discontinuous,lehner1964automorphic} for unexplained notations:
Let $f(\tau)$ be a weakly homomorphic modular form of weight $0$ with respect to $\Gamma$. Let $s_0=\infty, s_1,...,s_{l-1}$ be the set of inequivalent cusps of $\Gamma$,
and
$$A_0=\begin{pmatrix} 1&0\\0&1\end{pmatrix},\quad A_j=\begin{pmatrix} 0&-1\\1&-s_j\end{pmatrix},\, j>0.
$$
Let $M^*=A_j M=\begin{pmatrix} a&b\\c&d\end{pmatrix}$ for $M\in \Gamma$, and let
\begin{eqnarray*} C_{j0} &=& \{ c\, |\, \begin{pmatrix} \cdot &\cdot\\ c&\cdot\end{pmatrix}\in A_j\Gamma \}, \\
                  D_c    &=& \{ d\, |\, \begin{pmatrix} \cdot &\cdot\\ c&d\end{pmatrix}\in A_j\Gamma,\, 0<d\leq c \}.
\end{eqnarray*}
For $j=1,...,l-1$, let
$$e \left ( - \tfrac {\kappa_j} {\lambda_j} {A_j \tau} \right ) f(\tau)=\sum_{n=-\mu_j}^\infty a(n)^{(j)} q_j^n,\quad q_j=e \left ( \tfrac {A_j \tau}{\lambda_j} \right ),
$$
where $\kappa_j, \lambda_j$ are defined as in \cite{lehner1964automorphic}, page 398.
By replacing $A_j \tau$ by $\tau$, this can be written as
$$f(s_j-\tfrac 1\tau)= q^{\frac {\kappa_j}{\lambda_j}} \sum_{n=-\mu_j}^\infty a(n)^{(j)} q^{\frac n{\lambda_j}}.
$$
For $j=0$, we have the usual Fourier expansion: (We assume that $\lambda_0=1, \kappa_0=0$ for $\Gamma$.)
$$f(z)=\sum_{n=-\mu_0}^\infty a(n) q^n.
$$

\begin{Thm}[\cite{lehner1964discontinuous}] For $n>0$,
\begin{equation} \label{fourier} a(n)=2\pi  \sum_{j=0}^{l-1} \sum_{\nu=1}^{\mu_j} a(-\nu)^{(j)} \sum_{c\in C_{j0}} c^{-1}A(c,n,\nu_j)M(c,n,\nu_j,0),
\end{equation}
where $\nu_j=\frac {\nu-\kappa_j}{\lambda_j}$, and
\begin{eqnarray*}
A(c,n,\nu_j) &=& \sum_{d\in D_c} v^{-1}(M) e \left ( \frac {nd-\nu_ja}c \right ), \quad M=A_j^{-1} M^*, \\
M(c,n,\nu_j,0) &=& \left (\frac {\nu_j}n \right )^{\frac 12} I_1 \left(\frac {4\pi\sqrt{n\nu_j}}c \right).
\end{eqnarray*}
\end{Thm}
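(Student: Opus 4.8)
The plan is to prove \eqref{fourier} by the Hardy--Ramanujan--Rademacher circle method, in the refined Ford-circle form introduced by Rademacher; the point is that, $f$ being weakly holomorphic of weight $0$, it has genuine poles at the cusps $s_0,\dots,s_{l-1}$, and the principal parts $\sum_{\nu=1}^{\mu_j}a(-\nu)^{(j)}q_j^{-\nu}$ are exactly what will generate the series on the right-hand side.

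First I would start from Cauchy's formula: since $f$ has period $1$,
\[ a(n)=\int_{\rho}^{\rho+1} f(\tau)\,e(-n\tau)\,d\tau \]
for any $\rho$ in the upper half plane, and then deform the path of integration down toward $\mathbb R$, replacing the arc near each Farey fraction $h/k$ (of a suitable order) by the portion of the Ford circle tangent to $\mathbb R$ at $h/k$. Using Ford circles rather than straight Farey arcs is what will eventually give the convergence of the series for weight $0$. Next, for each $h/k$ I would choose the cusp $s_j$ equivalent to it and a matrix $M\in\Gamma$ for which $M^{*}=A_jM=\left(\begin{smallmatrix} a & b \\ c & d \end{smallmatrix}\right)$ has $c=k$; conjugating $f$ by $M$ via its weight-$0$ transformation law with multiplier $v(M)$ and inserting the local expansion $e\!\left(-\tfrac{\kappa_j}{\lambda_j}A_j\tau\right)f(\tau)=\sum_{\nu\ge-\mu_j}a(\nu)^{(j)}q_j^{\nu}$ at $s_j$, I would argue that, as the order of the dissection tends to infinity, only the finitely many principal-part terms $\nu=-1,\dots,-\mu_j$ survive, the holomorphic parts contributing an error that vanishes.

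Then I would evaluate the surviving integral over each transformed Ford arc. After the change of variables $\tau\mapsto A_j\tau$ it becomes a Hankel-type loop integral, which produces the Bessel factor $M(c,n,\nu_j,0)=\left(\tfrac{\nu_j}{n}\right)^{1/2}I_1\!\left(\tfrac{4\pi\sqrt{n\nu_j}}{c}\right)$ with $\nu_j=\tfrac{\nu-\kappa_j}{\lambda_j}$, together with the overall constant $2\pi$. Grouping the fractions with a fixed denominator $c$ according to the residue $d$ modulo $c$ then collects the multiplier phases into the Kloosterman-type sum $A(c,n,\nu_j)=\sum_{d\in D_c}v^{-1}(M)e\!\left(\tfrac{nd-\nu_j a}{c}\right)$, and summing over $c\in C_{j0}$, over $\nu=1,\dots,\mu_j$, and over the cusps $j=0,\dots,l-1$ assembles \eqref{fourier}.

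The hard part will be the analysis underlying this formal picture rather than the algebra: one must show that, as the order of the Farey/Ford dissection grows, the contributions of the holomorphic tails at each cusp and the discrepancy between the Ford arcs and the original horizontal contour tend to $0$, and that the resulting triple series over $j$, $\nu$ and $c$ is (conditionally) convergent for $n>0$. For weight $0$ this convergence is genuinely delicate and relies on the precise geometry of the Ford-circle dissection together with bounds on the exponential sums $A(c,n,\nu_j)$ balanced against the growth of $I_1$; carrying this through carefully is exactly the technical content of Lehner \cite{lehner1964discontinuous}, which we invoke.
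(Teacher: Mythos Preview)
The paper does not give its own proof of this theorem: it is quoted verbatim as Lehner's result from \cite{lehner1964discontinuous}, introduced by ``We recall the result of J.~Lehner~\dots,'' and then simply applied to the specific $f_m$ at hand. Your outline of the Ford-circle Rademacher method---Cauchy integral, Farey/Ford dissection, transforming each arc to the local cusp expansion via $A_jM$, isolating the principal parts, evaluating the Hankel-type integral to get the $I_1$ factor, and collecting the Kloosterman sums $A(c,n,\nu_j)$---is exactly the argument Lehner carries out in the cited reference, so there is nothing to compare against in this paper and your sketch is the right one.
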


Consider $\Gamma=\Gamma_0(12)$. The cusps are $s_0=\infty$, $s_1=\frac 12$, $s_2=\frac 14$, $s_3=\frac 13$, $s_4=\frac 16$, $s_5=0$, among which $s_1$ and $s_4$ are irregular.
Then $\kappa_{j}=0$, and 
$$\lambda_{0}=1,\quad \lambda_{1}=12,\quad \lambda_{2}=48,\quad
\lambda_{3}=36,\quad \lambda_{4}=36,\quad \lambda_{5}=12.
$$
We first consider $f_1$. Since $f_1$ is holomorphic at all other cusps except at $\infty$ (Corollary 4.14 of \cite{zhang2013vector}),
$$a(n)=2\pi \frac {A(12,n,1)}{12} M(12,n,1,0)+ \text{error term},
$$
where $M(12,n,1,0)=\frac 1{\sqrt{n}} I_1\left(\frac {4\pi\sqrt{n}}{12}\right)$ and
\begin{eqnarray*}
A(12,n,1) &=& \sum_{d\, \text{mod 12}} \chi_3(d) e\left(\frac {nd-a}{12}\right)\\ &=&e\left(\frac {n+11}{12}\right)-e\left(\frac {5n-5}{12}\right)-e\left(\frac {7n-7}{12}\right)+e\left(\frac {11n+1}{12}\right) \\
 &=& 4\sin\frac {\pi (n-1)}{2} \sin\frac {\pi (n-1)}3.
\end{eqnarray*}
Next we consider $f_{12}$. Note the following Fourier expansions along various cusps: $f_{12}$ is holomorphic at $\frac 12$, $\frac 16$ by Lemma \ref{Holo-Irr}, and by the $\epsilon$-condition,
\begin{eqnarray*}
f_{12}\left(\frac 13-\frac 1\tau\right) &=& -\frac i2 q^{-\frac 1{12}}+O(1);\\
f_{12}\left(\frac 14-\frac 1\tau\right) &=& -\frac {i\sqrt{3}}4 q^{-\frac 1{12}}+O(1); \\
f_{12}\left(-\frac 1\tau\right) &=& \frac {\sqrt{3}}2 q^{-\frac 1{12}}+O(1).
\end{eqnarray*}

We can see them as follows: Let $s=\frac 14$. By explicit computation, we may choose
\[\gamma_4=\begin{pmatrix}81&80\\ -1376&-1359\end{pmatrix}=\begin{pmatrix}409&-82\\ -6948&1393\end{pmatrix}\begin{pmatrix}1&2\\ 4&9\end{pmatrix}\in\Gamma_0(12)\begin{pmatrix}1&2\\ 4&9\end{pmatrix}.\]
Since $\chi_D(1393)=1$, we have
\[f_{12}|\gamma_4=f_{12}\left|\begin{pmatrix}1&2\\ 4&9\end{pmatrix}\right. . \]
Therefore
\[f_{12}\left(\frac 14-\frac 1\tau\right)=f\left|\begin{pmatrix} 1&-4\\4&0\end{pmatrix}\right.=f_{12}\left|\begin{pmatrix}1&2\\ 4&9\end{pmatrix}\begin{pmatrix}1&2\\ 4&9\end{pmatrix}^{-1}\begin{pmatrix}1&-4\\ 4&0\end{pmatrix}\right.=f_{12}\left|\gamma_4\begin{pmatrix}1&-36\\ 0&16\end{pmatrix}\right..\]
Since $f_{12}|\gamma_4=\frac{-\sqrt{3}i}{4}q^{-\frac{4}{3}}+O(1)$,
\[f_{12}\left(\frac 14-\frac 1\tau\right)=\frac{-\sqrt{3}i}{4}e(-3)q^{-\frac{1}{12}}+O(1)=\frac{-\sqrt{3}i}{4}q^{-\frac{1}{12}}+O(1).\]
Similarly, for the cusp $s=\frac 13$, let
\[\gamma_3=\begin{pmatrix}64&63\\ 1089&1072\end{pmatrix}=\begin{pmatrix}67&-1\\ 1140&-17\end{pmatrix}\begin{pmatrix}1&1\\ 3&4\end{pmatrix}.\]
Since $\chi(-17)=-1$, we have
\[f_{12}\left(\frac 13-\frac 1\tau\right)=f_{12}\left|\begin{pmatrix}1&-3\\ 3&0\end{pmatrix}\right.=f_{12}\left|\begin{pmatrix}1&1\\ 3&4\end{pmatrix}\begin{pmatrix}1&-12\\ 0&9\end{pmatrix}=-(f_{12}|\gamma_3)\left(\frac{\tau-12}{9}\right)\right..\]
Since $f_{12}|\gamma_3=\frac{i}{2}q^{-\frac{3}{4}}+O(1)$, we have
\[f_{12}\left(\frac 13-\frac 1\tau\right)=-\frac{i}{2}q^{-\frac{1}{12}}+O(1).\]
The case when $s=0$ is simpler, since we can just take $\gamma_6=S$. Therefore,
\[f_{12}\left(-\frac{1}{\tau}\right)=f_{12}|S=f_{12}|\gamma_6=\frac{\sqrt 3}{2}q^{-\frac{1}{12}}+O(1).\]
So the main term of $a(n)$ is
\begin{eqnarray*}
 &{}& 2\pi \Big \{ \frac 1{48} \left( e\left(\frac n{12}\right)+e\left(\frac {11n}{12}\right)-e\left(\frac {5n}{12}\right)-e\left(\frac {7n}{12}\right)\right) M(12,n,12,0)
 + \frac {\sqrt{3}}2 M(1,n,\frac 1{12},0) \\
 &+&\frac i2 \left(e\left(\frac n3\right)-e\left(\frac {2n}3\right)\right) M(1,n,\frac 1{12},0)+
\frac {i\sqrt{3}}4 \left(e\left(\frac n4\right)-e\left(\frac {3n}4\right)\right) M(1,n,\frac 1{12},0) \Big \}.
\end{eqnarray*}
and we have
$$a(n)=\frac {\pi}{\sqrt{n}} \left(\frac 1{\sqrt{3}}\left(\sin\frac {\pi n}2 \sin \frac {\pi n}3-\sin\frac {2\pi n}3\right)+
\frac 12\left(1-\sin \frac {\pi n}2\right)\right) I_1\left(\frac {4\pi\sqrt{n}}{\sqrt{12}}\right)+ \text{error term}.
$$

For example, when $n=18$, the main term is 177366 and the actual value of $a(18)$ is 177246. When $n=15$, the main term is 106127 and the actual value is 106144.
When $n=17$, the main term vanishes, and it agrees with the numerical data. 

The error term can be computed as in \cite{kim2012rank}, and we can show that $a(n)$ is always non-negative. We summarize the above results as 
\begin{Prop} For $\Gamma_0(12)$, let $f_m=s(m)^{-1}q^{-m}+\sum_{n=0}^\infty a_m(n)q^n$. Then for $f_1$,

$$a_1(n)=\frac {2\pi}{3\sqrt{n}} \sin\frac {\pi (n-1)}{2} \sin\frac {\pi (n-1)}3 I_1\left(\frac {\pi\sqrt{n}}3\right)+O\left(\frac {8\pi^{\frac 32}\log\frac {\pi\sqrt{n}}3}{3 n^{\frac 14}} I_1\left(\frac {\pi\sqrt{n}}6\right)\right).
$$

For $f_{12}$, $a_{12}(n)\geq 0$ for all $n$, and
\begin{eqnarray*}
a_{12}(n) &=& \frac {\pi}{\sqrt{n}} \left(\frac 1{\sqrt{3}}\left(\sin\frac {\pi n}2 \sin \frac {\pi n}3-\sin\frac {2\pi n}3\right)+
\frac 12\left(1-\sin \frac {\pi n}2\right)\right) I_1\left(\frac {4\pi\sqrt{n}}{\sqrt{12}}\right) \\
&+& O\left(16\sqrt{\pi} (12 n)^{\frac 14}\log \frac {4\pi\sqrt{n}}{\sqrt{12}} I_1\left(\frac {2\pi\sqrt{n}}{\sqrt{12}}\right)\right).
\end{eqnarray*}
\end{Prop}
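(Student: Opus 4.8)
The plan is to apply Lehner's exact formula \eqref{fourier} to $f_1$ and $f_{12}$, regarded as weight $0$ weakly holomorphic modular forms on $\Gamma=\Gamma_0(12)$, to retain only the single term producing the Bessel factor with the largest argument, and to estimate everything else with the circle-method bookkeeping of \cite{kim2012rank}. For $f_1$ this is comparatively clean: since $f_1$ is holomorphic at every cusp except $\infty$ (Corollary 4.14 of \cite{zhang2013vector}), only the $j=0$ summand survives, with $\lambda_0=1$, $\kappa_0=0$, $\mu_0=1$, $\nu=\nu_0=1$ and principal coefficient $a(-1)=1$; the set $C_{00}$ consists of the positive multiples of $12$, and since $I_1$ is increasing, $M(c,n,1,0)=n^{-1/2}I_1(4\pi\sqrt n/c)$ is maximal at $c=12$. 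Extracting that term and inserting $A(12,n,1)=4\sin\tfrac{\pi(n-1)}2\sin\tfrac{\pi(n-1)}3$ computed above yields exactly the displayed leading term. The contributions of $c=24,36,\dots$ are bounded by the trivial estimate $|A(c,n,1)|\le c$, the inequality $I_1(4\pi\sqrt n/c)\le I_1(\pi\sqrt n/6)$ for $c\ge 24$, and elementary monotonicity bounds for $I_1$; summing over $c$ and accounting for how many $c$ keep the Bessel argument above a fixed level produces the logarithmic factor, giving the stated error.

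For $f_{12}$ the same scheme is used but with more cusps in play. Its principal parts occur at $\infty$ (only the $\nu=12$ term, with coefficient $\tfrac14$) and at $\tfrac13,\tfrac14,0$, with leading coefficients $-\tfrac i2,\ -\tfrac{i\sqrt3}4,\ \tfrac{\sqrt3}2$ in front of $q^{-1/12}$ as computed above, while $f_{12}$ is holomorphic at the irregular cusps $\tfrac12,\tfrac16$ by Lemma \ref{Holo-Irr}. In \eqref{fourier} the largest Bessel factor is $I_1(4\pi\sqrt n/\sqrt{12})$, and it is produced by the $j=0$ term with $c=12$, $\nu_0=12$ together with, for each of the three finite cusps, the term of smallest admissible modulus (one checks $c=1\in C_{j0}$ in each case, after converting the expansions at $\tfrac13,\tfrac14,0$ to Lehner's normalization with $\lambda_3=36$, $\lambda_2=48$, $\lambda_5=12$, so that $\nu_j=\tfrac1{12}$). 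Adding these four contributions and rewriting the resulting exponential (Gauss) sums in terms of sines gives the displayed leading coefficient multiplying $\tfrac{\pi}{\sqrt n}I_1(4\pi\sqrt n/\sqrt{12})$, and all remaining terms are absorbed into the stated error exactly as for $f_1$; the numerical checks at $n=15,17,18$ recorded above serve as sanity tests.

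For the positivity of $a_{12}(n)$ I would argue by residue classes modulo $12$. By the isomorphism of Theorem 4.16 in \cite{zhang2013vector}, the coefficient $a_{12}(n)$ can be nonzero only when $n\equiv N q(\gamma)\imod N$ for some $\gamma\in D$, and for $N=12$ the set of values $N q(\gamma)$ modulo $12$ equals $\{0,2,3,6,8,11\}$; hence $a_{12}(n)=0$ whenever $n\equiv1,4,5,7,9,10\imod{12}$, where the inequality is trivial. A direct evaluation shows that the leading coefficient $\tfrac1{\sqrt3}\big(\sin\tfrac{\pi n}2\sin\tfrac{\pi n}3-\sin\tfrac{2\pi n}3\big)+\tfrac12\big(1-\sin\tfrac{\pi n}2\big)$ vanishes on exactly those same six classes, whereas on $n\equiv0,2,3,6,8,11\imod{12}$ it takes the values $\tfrac12,1,1,\tfrac12,1,2$ respectively and so is $\ge\tfrac12$. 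Therefore, for $n$ in the latter classes the main term is $\ge\tfrac{\pi}{2\sqrt n}I_1(4\pi\sqrt n/\sqrt{12})$ while the error is $O\big((12n)^{1/4}(\log n)\,I_1(2\pi\sqrt n/\sqrt{12})\big)$; since $I_1(x)\sim e^{x}/\sqrt{2\pi x}$, the ratio of the two Bessel factors grows like $e^{2\pi\sqrt n/\sqrt{12}}$ and overtakes the polynomial-times-logarithm factor once $n$ exceeds an explicit bound $n_0$, giving $a_{12}(n)>0$ there, while the finitely many $n\le n_0$ are disposed of by the explicit $q$-expansion of $f_{12}$ (extended by a few more terms if necessary).

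The main obstacle is the error analysis. To make the argument rigorous one must track the implied constants in Lehner's formula precisely enough that the error term has exactly the stated shape and, more importantly for the positivity statement, that the threshold $n_0$ is small enough to fall inside the range of the explicitly computed Fourier coefficients. This requires, as in \cite{kim2012rank}, sharp-enough trivial bounds on the Kloosterman-type sums $A(c,n,\nu)$, careful control of how many cusps and moduli $c$ contribute at each dyadic scale, and quantitative upper and lower bounds for the modified Bessel function $I_1$; this is where the bulk of the work lies.
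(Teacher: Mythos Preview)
Your proposal is correct and follows essentially the same route as the paper: apply Lehner's exact formula, isolate the $c=12$ contribution at $\infty$ for $f_1$ and the four contributions (from $\infty$ with $c=12$, and from the cusps $\tfrac13,\tfrac14,0$ with $c=1$) for $f_{12}$, and bound the remainder as in \cite{kim2012rank}. Your residue-class argument for the positivity of $a_{12}(n)$---observing that the classes on which the leading coefficient vanishes coincide exactly with the classes on which $a_{12}(n)=0$ by the support condition coming from Theorem~4.16 of \cite{zhang2013vector}, while on the remaining classes the coefficient is $\ge\tfrac12$---is a clean and explicit way to fill in what the paper leaves as ``we can show that $a(n)$ is always non-negative,'' but it is the natural elaboration rather than a different method.
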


Consider $\Gamma=\Gamma_0(8)$. The cusps are $s_0=\infty$, $s_1=\frac 12$, $s_2=\frac 14$, and $s_3=0$, of which $s_1$ and $s_2$ are irregular.
Since $f_1$ is holomorphic at all other cusps except at $\infty$ (Corollary 3.6 of \cite{zhang2013zagier}),
$$a_1(n)=\frac {2\pi}8 A(8,n,1)M(8,n,1,0)+\text{error term},
$$
where $M(8,n,1,0)=\frac 1{\sqrt{n}} I_1\left(\frac {4\pi\sqrt{n}}{8}\right)$, and 
$$A(8,n,1)=\sum_{d\, \text{mod 8}} \chi_{8}(d) e\left(\frac {nd-a}{8}\right)=4\sin\frac {\pi (n-1)}2 \sin\frac {\pi (n-1)}4.
$$
Consider $f_8$: By Lemma \ref{Holo-Irr}, $f_8$ is holomorphic at $\frac 12$ and $\frac 14$, and
$$f_8\left(-\frac 1\tau\right)=\sqrt{2} q^{-\frac 18}+O(1).
$$
Hence the main term of $a_8(n)$ is
\begin{eqnarray*}
 &{}& 2\pi \Big \{ \frac 1{16} \left( e\left(\frac n{8}\right)+e\left(\frac {7n}{8}\right)-e\left(\frac {3n}{8}\right)-e\left(\frac {5n}{8}\right)\right) M(8,n,8,0)
 + \sqrt{2} M(1,n,\frac 1{8},0) \Big \} \\
&{}& =\frac {\pi}{\sqrt{n}} I_1(\pi\sqrt{2n}) \left(1+\sqrt{2}\sin\frac {\pi n}2 \sin\frac {\pi n}4\right).
\end{eqnarray*}

When $n=17$, this is $1.2557\times 10^7$ and the actual value of $a_8(17)$ is 12556992. When $n=18$, this is $1.02365\times 10^7$ and the actual value is 10235352. We can show easily that $a_8(n)$ is always non-negative. We summarize our results as 
\begin{Prop} For $\Gamma_0(8)$, let $f_m=s(m)^{-1}q^{-m}+\sum_{n=0}^\infty a_m(n)q^n$. Then for $f_1$,
$$a_1(n)=\frac {\pi}{\sqrt{n}} \sin\frac {\pi (n-1)}2 \sin\frac {\pi (n-1)}4 I_1\left(\frac {\pi\sqrt{n}}{2}\right)
+O\left( \frac {2\sqrt{\pi}\log\frac {\pi\sqrt{n}}2}{n^{\frac 14}} I_1\left(\frac {\pi\sqrt{n}}4\right)\right).
$$
For $f_{8}$, $a_{8}(n)\geq 0$ for all $n$, and
\begin{equation*}
a_{8}(n)=\frac {\pi}{\sqrt{n}} I_1(\pi\sqrt{2n}) \left(1+\sqrt{2}\sin\frac {\pi n}2 \sin\frac {\pi n}4\right)+
O\left(\frac {2^{\frac 14} \sqrt{\pi}\log \pi\sqrt{2n}}{n^{\frac 14}} I_1\left(\frac {\pi\sqrt{n}}{\sqrt{2}}\right)\right).
\end{equation*}
\end{Prop}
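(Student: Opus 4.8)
The plan is to apply Lehner's formula \eqref{fourier} with $\Gamma=\Gamma_0(8)$, in complete parallel with the computation just carried out for $\Gamma_0(12)$, and then to combine the resulting exact series with an effective error estimate. First I would record the data needed to invoke \eqref{fourier}: the inequivalent cusps $\infty,\tfrac12,\tfrac14,0$ and their widths $\lambda_j$, all with $\kappa_j=0$. For $f_1$, Corollary~3.6 of \cite{zhang2013zagier} shows it is holomorphic at $\tfrac12,\tfrac14,0$, so the only principal part is $q^{-1}$ at $\infty$ and only the $j=0$, $\nu=1$ term of \eqref{fourier} survives. For $f_8$, Lemma~\ref{Holo-Irr} gives holomorphy at the irregular cusps $\tfrac12,\tfrac14$, while at the cusp $0$ one has $f_8|S=\sqrt2\,q^{-1/8}+O(1)$; this can be obtained either by transporting the principal part through the isomorphism of Theorem~4.16 of \cite{zhang2013vector} (the pole of $F_0|S$ is governed by those $a(n)$ with $n\equiv 0\imod 8$), or directly from the $\eta$-quotient of Section~\ref{sec-integ} via $\eta(-1/\tau)=\sqrt{-i\tau}\,\eta(\tau)$. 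Thus for $f_8$ exactly the $j=0$ term and the term attached to the cusp $0$ contribute their principal parts.

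Next I would evaluate the leading terms. For $f_1$ the $j=0$ contribution is $\tfrac{2\pi}{8}A(8,n,1)M(8,n,1,0)$; since $d^2\equiv 1\imod 8$ for $d$ coprime to $8$, the exponential sum is $A(8,n,1)=\sum_{d\bmod 8}\chi_8(d)e\!\big(\tfrac{(n-1)d}{8}\big)$, which, writing the four roots of unity as cosines and using a sum-to-product identity, collapses to $4\sin\tfrac{\pi(n-1)}2\sin\tfrac{\pi(n-1)}4$; together with $M(8,n,1,0)=\tfrac1{\sqrt n}I_1\!\big(\tfrac{\pi\sqrt n}2\big)$ this gives the stated main term of $a_1(n)$. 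For $f_8$ I would collect the $A$-sums from $c=8$ at $\infty$ and from $c=1$ at the cusp $0$; after simplification of the roots of unity these assemble into the factor $1+\sqrt2\,\sin\tfrac{\pi n}2\sin\tfrac{\pi n}4$ multiplying $\tfrac{\pi}{\sqrt n}I_1(\pi\sqrt{2n})$, as in the displayed formula.

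For the error terms I would bound the tail of \eqref{fourier} over the remaining values of $c$ at each cusp: the trivial estimate $|A(c,n,\nu_j)|\le c$, the monotonicity of $I_1$ and of $(\nu_j/n)^{1/2}$, and summation over $c$ (the count of admissible $c$ up to the relevant cutoff producing the logarithmic factor) give the $O$-bounds, whose Bessel arguments $\tfrac{\pi\sqrt n}4$ (for $f_1$) and $\tfrac{\pi\sqrt n}{\sqrt2}$ (for $f_8$) are exactly half of the leading ones. This is done precisely as in \cite{kim2012rank}; only the level changes. Finally, for positivity of $a_8(n)$: for $n\in\mathbb Z$ one has $\sin\tfrac{\pi n}2\in\{0,\pm1\}$, and when it equals $\pm1$ then $n$ is odd and $\sin\tfrac{\pi n}4=\pm\tfrac1{\sqrt2}$, so $1+\sqrt2\,\sin\tfrac{\pi n}2\sin\tfrac{\pi n}4\in\{0,1,2\}$, vanishing exactly for $n\equiv 3,5\imod 8$. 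But for those residues $(n,8)=1$ and $\chi_8(n)=-1=-\epsilon_2$, so the $\epsilon$-condition forces $a_8(n)=0$. For all other $n$ the main term is $\ge\tfrac{\pi}{\sqrt n}I_1(\pi\sqrt{2n})$; since $\pi\sqrt{2n}-\tfrac{\pi\sqrt n}{\sqrt2}=\pi\sqrt{n/2}\to\infty$ and $I_1(x)\sim e^x/\sqrt{2\pi x}$, the ratio of Bessel factors grows exponentially, so the main term beats the error for $n\ge n_0$ with an explicit $n_0$, and the finitely many $n<n_0$ are checked directly, consistently with the expansion of $f_8$ in Section~\ref{hyperbolic}.

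I expect the main obstacle to be this last step — making the error estimate \emph{effective}, with honest constants and an explicit cutoff $n_0$ small enough that the finite check is genuinely finite. This means reworking the Kloosterman-type sum bounds and the Bessel-function tail estimates of \cite{kim2012rank} at level $8$ (and level $12$), which is routine in method but requires care to keep the constants tight.
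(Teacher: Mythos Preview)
Your proposal is correct and follows essentially the same approach as the paper: apply Lehner's formula at level $8$, use Corollary~3.6 of \cite{zhang2013zagier} for $f_1$ and Lemma~\ref{Holo-Irr} together with the expansion $f_8|S=\sqrt{2}\,q^{-1/8}+O(1)$ for $f_8$, then bound the tail as in \cite{kim2012rank}. Your positivity argument (observing $1+\sqrt{2}\sin\tfrac{\pi n}{2}\sin\tfrac{\pi n}{4}\in\{0,1,2\}$, with vanishing precisely at $n\equiv 3,5\pmod 8$ where the $\epsilon$-condition already forces $a_8(n)=0$) in fact spells out what the paper only asserts in one line.
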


Consider $\Gamma=\Gamma_0(21)$. The cusps are $s_0=\infty$, $s_1=\frac 13$, $s_2=\frac 17$, and $s_3=0$.
Then since $f_1$ is holomorphic at all other cusps except at $\infty$ (Corollary 3.6 in \cite{zhang2013zagier}),
$$a_1(n)=\frac {2\pi}{21} A(21,n,1)M(21,n,1,0)+\text{error term},
$$
where $M(21,n,1,0)=\frac 1{\sqrt{n}} I_1\left(\frac {4\pi\sqrt{n}}{21}\right)$, and 
$$A(21,n,1)=\sum_{d\, \text{mod 21}} \chi_{21}(d) e\left(\frac {nd-a}{21}\right)=\sqrt{21} \left(\frac n{21}\right) \sum_{v^2\equiv -n \,\text{mod 21}} e\left(\frac {2v}{21}\right).
$$

We summarize our result as 
\begin{Prop} For $\Gamma_0(21)$, let $f_m=s(m)^{-1}q^{-m}+\sum_{n=0}^\infty a_m(n)q^n$. Then for $f_1$,

$$a_1(n)=\frac {2\pi}{\sqrt{21 n}} I_1\left(\frac {4\pi\sqrt{n}}{\sqrt{21}}\right) \left(\frac n{21}\right) \sum_{v^2\equiv -n \,\text{mod 21}} e\left(\frac {2v}{21}\right) 
+O\left(\frac {4\pi^{\frac 32}\log\frac {4\pi\sqrt{n}}{21}}{21 n^{\frac 14}} I_1\left(\frac {2\pi\sqrt{n}}{21}\right)\right).
$$
\end{Prop}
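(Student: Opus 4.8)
The plan is to mirror the Hardy--Ramanujan--Rademacher computations carried out above for $\Gamma_0(12)$ and $\Gamma_0(8)$, now for $\Gamma=\Gamma_0(21)$. First I would record the cusp data: $\Gamma_0(21)$ has the four inequivalent cusps $s_0=\infty$, $s_1=\frac13$, $s_2=\frac17$, $s_3=0$, and since $21$ is odd there are no irregular cusps. By the $\epsilon$-condition together with Corollary~3.6 in \cite{zhang2013zagier}, the weakly holomorphic form $f_1$ has a pole only at $s_0=\infty$, with principal part $q^{-1}$ (note that $s(1)=1$). Hence, in Lehner's theorem, $\kappa_0=0$, $\lambda_0=1$, $\mu_0=1$, $a(-1)=1$ and $\mu_j=0$ for $j\ge1$, so only the term $j=0$, $\nu=1$ survives in \eqref{fourier}:
\[
a_1(n)=2\pi\sum_{c\in C_{00}}c^{-1}A(c,n,1)\,M(c,n,1,0),\qquad C_{00}=\{21,42,63,\dots\},
\]
where $M(c,n,1,0)=\frac{1}{\sqrt n}\,I_1\!\left(\frac{4\pi\sqrt n}{c}\right)$ and $A(c,n,1)=\sum_{d}\chi_{21}(d)\,e\!\left(\frac{nd-\bar d}{c}\right)$, the sum running over the lower-right entries $d$ of matrices in $\Gamma_0(21)$ with lower-left entry $c$, and $\bar d\,d\equiv1\imod c$.

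Next I would isolate the dominant term $c=21$ and evaluate its exponential sum. Here $A(21,n,1)=\sum_{d\bmod 21,\,(d,21)=1}\chi_{21}(d)\,e\!\left(\frac{nd-\bar d}{21}\right)$; since $21\equiv1\imod4$, quadratic reciprocity for the Jacobi symbol gives $\chi_{21}(d)=\left(\frac{d}{21}\right)$, so this is a Salié sum, and its classical evaluation yields
\[
A(21,n,1)=\sqrt{21}\left(\tfrac{n}{21}\right)\sum_{v^2\equiv-n\imod{21}}e\!\left(\tfrac{2v}{21}\right).
\]
Combining this with $M(21,n,1,0)=\frac{1}{\sqrt n}I_1\!\left(\frac{4\pi\sqrt n}{21}\right)$ and the identity $\frac{2\pi}{21}\sqrt{21}=\frac{2\pi}{\sqrt{21}}$ produces the main term recorded in the Proposition.

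The remaining step---and the main obstacle---is the error term: one must show that the tail $\sum_{c\in C_{00},\,c\ge42}c^{-1}A(c,n,1)M(c,n,1,0)$ is $O\!\left(\frac{4\pi^{3/2}\log(4\pi\sqrt n/21)}{21\,n^{1/4}}\,I_1\!\left(\frac{2\pi\sqrt n}{21}\right)\right)$. This is carried out exactly as in \cite{kim2012rank}: bound $|A(c,n,1)|$ by an elementary estimate (at worst $O(c)$, with square-root cancellation available from the quadratic-character structure), insert the asymptotic $I_1(x)=\frac{e^x}{\sqrt{2\pi x}}\bigl(1+O(x^{-1})\bigr)$, and apply the standard Rademacher-type bound for the tail of such Bessel-function series, whose leading contribution now comes from $c=42$. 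One must also verify, for $j=1,2,3$, that the expansions of $f_1$ at $\frac13$, $\frac17$, $0$ are genuinely holomorphic; this uses explicit matrices conjugating the cusp representatives into $\Gamma_0(21)$, as in the $\Gamma_0(12)$ computation above, and is bookkeeping rather than a new idea. Assembling the $c=21$ main term with this $O$-bound gives the stated asymptotic for $a_1(n)$.
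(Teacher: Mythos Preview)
Your proposal is correct and follows essentially the same route as the paper: identify the cusps of $\Gamma_0(21)$, invoke Corollary~3.6 of \cite{zhang2013zagier} to see that $f_1$ is holomorphic away from $\infty$, apply Lehner's formula so that only the $j=0$ contribution survives, evaluate the Sali\'e-type sum $A(21,n,1)$ to obtain the main term, and defer the error bound to the computation in \cite{kim2012rank}. Your only addition is a bit more detail on the Sali\'e evaluation and the tail estimate, but the argument is the same.
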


Similar computations can be applied to other cases. 

\vskip 0.5 cm

\bibliographystyle{amsplain}
\bibliography{paper}

\end{document}